\tikzset{>=stealth',
     cvertex/.style={circle,draw=black,inner sep=1pt,outer sep=3pt},
     vertex/.style={circle,fill=black,inner sep=1pt,outer sep=3pt},
     star/.style={circle,fill=yellow,inner sep=0.75pt,outer sep=0.75pt},
     tvertex/.style={inner sep=1pt,font=\criptsize},
     gap/.style={inner sep=0.5pt,fill=white}}
\newcommand{\arrowrl}[3][20]
{
\hspace{-5pt}
\begin{tikzpicture}
\node (A) at (0,0) {};
\node (B) at (1,0) {};
\draw[->] ($(A)+(0,0.2)$) -- node [above] {$\scriptstyle f^*$} ($(B)+(0,0.2)$);
\draw [->] ($(B)+(0,0.2)$) -- node [below] {$\scriptstyle f_*$} ($(A)+(0,0.2)$);
\end{tikzpicture}
\hspace{-5pt}
}
\newcommand{\adj}[2][20]{\arrowrl}
\tikzset{
    ncbar angle/.initial=90,
    ncbar/.style={
        to path=(\tikztostart)
        -- ($(\tikztostart)!#1!\pgfkeysvalueof{/tikz/ncbar angle}:(\tikztotarget)$)
        -- ($(\tikztotarget)!($(\tikztostart)!#1!\pgfkeysvalueof{/tikz/ncbar angle}:(\tikztotarget)$)!\pgfkeysvalueof{/tikz/ncbar angle}:(\tikztostart)$)
        -- (\tikztotarget)
    },
    ncbar/.default=0.5cm,
}
\tikzset{square left brace/.style={ncbar=0.5cm}}
\tikzset{square right brace/.style={ncbar=-0.5cm}}
\tikzset{round left paren/.style={ncbar=0.5cm,out=120,in=-120}}
\tikzset{round right paren/.style={ncbar=0.5cm,out=60,in=-60}}
\newcommand{\qurep}[2]{\operatorname{\rightleftarrows}^{#1}_{#2}}  
\newcommand{\fm}{{\mathfrak m}}
\newcommand{\fp}{{\mathfrak p}}
\newcommand{\cala}{{\mathcal A}}
\newcommand{\CC}{{\mathbb C}}
\newcommand{\EE}{{\mathbb E}}
\newcommand{\PP}{{\mathbb P}}
\newcommand{\RR}{{\mathbb R}}
\DeclareMathOperator{\End}{ End}
\DeclareMathOperator{\GL}{GL}
\DeclareMathOperator{\gl}{gldim}
\DeclareMathOperator{\Hom}{Hom}
\DeclareMathOperator{\rank}{rank}
\DeclareMathOperator{\Sing}{Sing}
\DeclareMathOperator{\SL}{SL}
\DeclareMathOperator{\Spec}{Spec}
\DeclareMathOperator{\Sym}{{\mathbb S}ym}
\newcommand{{\sbullet}}{{\scriptstyle\bullet}}
\newcommand{\diag}{\mathsf {diag}}
\DeclareMathOperator{\CM}{\ensuremath{ \mathbf{CM}}}
\DeclareMathOperator{\Coh}{\mathbf{Coh}}
\DeclareMathOperator{\fmod}{\ensuremath{ \mathbf{mod}}}
\theoremstyle{definition}
\newtheorem{defn}{Definition}[section]
\newtheorem{rem}[defn]{Remark}
\newtheorem{remark}[defn]{Remark}
\newtheorem{rems}[defn]{Remarks}
\newtheorem{question}[defn]{Question}
\newtheorem{sit}[defn]{}
\newtheorem{example}[defn]{Example}
\newtheorem{exam}[defn]{Example}
\theoremstyle{plain}
\newtheorem{prop}[defn]{Proposition}
\newtheorem{theorem}[defn]{Theorem}
\newtheorem{lem}[defn]{Lemma}
\newtheorem{cor}[defn]{Corollary}
\newtheorem{conjecture}[defn]{Conjecture}
\def\bexa{\begin{exam}}
\def\eexa{\end{exam}}
\def\bpro{\begin{prop}}
\def\epro{\end{prop}}
\def\bcor{\begin{cor}}
\def\ecor{\end{cor}}
\def\bthm{\begin{theorem}}
\def\ethm{\end{theorem}}
\def\bdfn{\begin{eefn}}
\def\edfn{\end{eefn}}
\def\brem{\begin{rem}}
\def\erem{\end{rem}}
\def\brems{\begin{rems}}
\def\erems{\end{rems}}
\def\bsit{\begin{sit}}
\def\esit{\end{sit}}
\def\blem{\begin{lem}}
\def\elem{\end{lem}}
\def\bdi{\pdfsyncstop\begin{eiagram}}
\def\edi{\end{eiagram}\pdfsyncstart}
\def\ba{\begin{array}}
\def\ea{\end{array}}
\def\bnum{\begin{enumerate}}
\def\enum{\end{enumerate}}
\def\be{\begin{equation}}
\def\ee{\end{equation}}
\def\bproof{\begin{proof}}
\def\eproof{\end{proof}}
\newcommand{\rot}[1]{{\color{Red}  #1}}
\begin{document}
\title[Noncommutative resolutions]
{Noncommutative resolutions of discriminants}

\author[Ragnar-Olaf Buchweitz]{Ragnar-Olaf Buchweitz$^\dagger$}
\address{Dept.\ of Computer and Math\-ematical Sciences,
University  of Tor\-onto at Scarborough, 
1265 Military Trail, 
Toronto, ON M1C 1A4,
Canada}

\author{Eleonore Faber}
\address{Department of Mathematics, University of Michigan, Ann Arbor MI, 48109, USA }
\email{emfaber@umich.edu}

\author{Colin Ingalls}
\address{
Department of Mathematics and Statistics,
University of New Brunswick, 
Fredericton, NB. E3B 5A3, 
Canada}
\email{cingalls@unb.ca}

\thanks{
R.-O. \!B.~was partially supported by an NSERC Discovery grant, E.F.~was partially supported by an Oberwolfach Leibniz Fellowship, C.I.~was partially supported by an NSERC Discovery grant.
} 
\thanks{${}^\dagger$The first author passed away on November 11, 2017.}
\subjclass[2010]{14E16, 13C14, 14E15, 14A22 }  
\keywords{Reflection groups, hyperplane arrangements, maximal Cohen--Macaulay modules,
matrix factorizations, noncommutative desingularization}
\date{\today}

\begin{abstract} 
 We give an introduction to the McKay correspondence and its connection to quotients of $\CC^n$ by finite reflection groups. This yields a natural construction of noncommutative resolutions of the discriminants of these reflection groups. This paper is an extended version of E.~F.'s talk with the same title delivered at the ICRA.
\end{abstract}
\maketitle

\section{Introduction}

This article has two objectives: first, we want to present the components of the classical McKay correspondence, which relate algebra, geometry and representation theory. On the other hand, we give a short introduction to discriminants of finite reflection groups and our construction of their noncommutative desingularizations. The details of our new results will be published elsewhere \cite{BFI}.  \\

 This project grew out of the following (which is NOT how we will present our story!):  Start with a commutative ring $R$, then a noncommutative resolution of singularities (=NCR) of $R$ (or of $\Spec(R)$) is an  endomorphism ring $\End_R M$ of a faithful module $M$ such that the global dimension of $\End_R M$ is finite. Endomorphism rings of finite global dimension have got a lot of attention lately because of their connections to various parts of mathematics, such as commutative algebra, noncommutative algebraic geometry and representation theory. The problem of constructing explicit NCRs is difficult in general and one only knows some scattered examples. In particular, NCRs for non-normal rings have not been considered much in the literature, mostly only  in examples where $R$ has Krull-dimension $\leq 2$ (geometrically: $R$ is the coordinate ring of a collection of points, curves or  surfaces).  \\

In \cite{DFI} we were interested in noncommutative resolutions (=NCRs) of certain non-normal hypersurfaces, so-called free divisors, and explicitly constructed a NCR of a normal crossings divisor. After some extensive calculations we realized that there was a more conceptual way to understand this particular NCR as a quotient of the skew group ring of the polynomial ring in $n$ variables and the reflection group $(\mu_2)^n$ by a two-sided ideal generated by an idempotent of the skew group ring. And since the normal crossings divisor is the discriminant of a real reflection group, we had the idea to carry out the same quotient procedure for other discriminants of reflection groups. It was readily verified that indeed this construction works for any discriminant of a finite reflection group. \\
After some more painful calculations and serious doubts, that these quotients are endomorphism rings in general, we were finally able to also prove this for reflection groups generated by reflections of order $2$, and discovered the surprising fact that the corresponding reflection arrangement is in some sense the noncommutative resolution of the discriminant of a reflection group. Moreover, our work generalizes Auslander's theorem about skew group rings of small subgroups $G \subseteq \GL(n,\CC)$ and thus our NCRs give rise to a McKay correspondence for reflection groups. For reflection groups $G \subseteq \GL(2,\CC)$ our results nicely fit in the picture of the classical McKay correspondence: we see that the reflection arrangement provides a representation generator for the torsion-free modules over the discriminant curves. \\

This paper is organized as follows: 
\tableofcontents 

\section{Reflection groups}

Throughout the paper, we assume that $k=\CC$ and that $G \subseteq \GL(V)$ is a finite subgroup acting on $V \cong \CC^n$. Thus,  $G \subseteq \GL(V)$ means $G \subseteq \GL(n,\CC)$. Then $\Sym_k(V) \cong k[x_1, \ldots, x_n]$ is a (graded) polynomial ring, with the standard grading $|x_i|=1$. Assume that $G$ acts on $V$, then any $g \in G$ acts also on $S$ via $g(f(x_1, \ldots, x_n))=f(g(x_1, \ldots, x_n))$. Note that in most of the literature (about the McKay correspondence), one considers the completion $k[[x_1, \ldots, x_n]]$ of $S$, but in fact, all the results we need work for $S$ the graded polynomial ring. \\

Here we gather some well-known results about complex reflection groups, see e.g.~\cite{BourbakiLIE4-6,LehrerTaylor,OTe}, we follow the notation of \cite{OTe}.

\subsection{Basics} An element $g \in \GL(V)$ is called a \emph{pseudo-reflection} if it is conjugate to a diagonal matrix $\diag(\zeta, 1 , \ldots, 1)$ with $\zeta \neq 1$ a root of unity. If $\zeta=-1$, we call $g$ a \emph{(true) reflection}. A pseudo-reflection $g$ fixes a unique hyperplane $H$ pointwise. This hyperplane will be called the \emph{mirror} of $g$.  If $G$ is generated by pseudo-reflections, it is called a \emph{complex reflection group} (or \emph{pseudo-reflection group}). If $G$ is generated by true reflections, then we will call $G$ a \emph{true reflection group}. A finite subgroup $G \subseteq \GL(V)$ is called \emph{small}, if $G$ does not contain any pseudo-reflections.  \\

Let now $G \subseteq \GL(V)$ be finite. The \emph{invariant ring} of the action of $G$ on $V$ is $R=S^G=\{ f \in S: g(f) = f$ for all $g \in G \}$. By the theorem of Hochster--Roberts \cite{HochsterRoberts}, $R$ is Cohen--Macaulay and it is Gorenstein if $G \subseteq \SL(V)$, by \cite{WatanabeGorenstein}. If $G$ is a pseudo-reflection group, we are even in a better situation, cf.~\cite{Che}, \cite[Thm.~6.19]{OTe}:

\begin{theorem}[Chevalley--Shephard--Todd] \label{Thm:CST} 
\begin{enumerate}[(a)]
\item Let $G \subseteq \GL(V)$ be a finite group acting on $V$. Then $G$ is a pseudo-reflection group if and only if there exist homogeneous polynomials $f_1, \ldots, f_n \in R$ such that $R=k[f_1, \ldots, f_n]$. 
\item Moreover, $S$ is free as an $R$-module, more precisely, $S \cong R\otimes_k kG$ as $R$-modules, where $kG$ denotes the group ring of $G$.
\end{enumerate}
\end{theorem}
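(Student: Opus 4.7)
The theorem has two parts: (a) characterizes pseudo-reflection groups by the polynomiality of $R$, and (b) identifies the $R$-module structure of $S$. I would prove both by weaving through three quantities: the group $G$, the graded ring $R$, and the \emph{coinvariant algebra} $A := S/\fh$, where $\fh := R_+ \cdot S$ is the Hilbert ideal and $R_+$ denotes the augmentation ideal of $R$.

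For (a), the nontrivial direction is ``pseudo-reflection group $\Rightarrow$ polynomial invariants'', and I plan to follow Chevalley's classical approach. Choose a minimal homogeneous generating set $f_1, \dots, f_m$ of $R_+$; graded Nakayama gives $m = \dim_k R_+/R_+^2$, and since $R \subseteq S$ is a finite extension one has $\dim R = n$, hence $m \ge n$. The content of the theorem is that $m = n$ and the $f_i$ are algebraically independent. The algebraic independence is the hard step: given a hypothetical relation $F(f_1,\dots,f_m) = 0$ of minimal total degree, the chain rule forces its derivatives $h_i := (\partial F/\partial y_i)(f_1,\dots,f_m)$ to satisfy the syzygy $\sum_i h_i \, \partial f_i/\partial x_j = 0$ for every $j$. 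One exploits this syzygy against an arbitrary pseudo-reflection $s \in G$ (which acts as the identity modulo the defining form $\ell_H$ of its mirror $H$) to deduce that $\ell_H$ divides each $h_i$ modulo the ideal generated by the remaining $h_j$; iterating over pseudo-reflections generating $G$ contradicts the minimality of $\{f_i\}$. This algebraic-independence step is the technical heart of the argument and the main obstacle I expect to confront.

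The converse direction of (a) is comparatively light: let $W \le G$ be the subgroup generated by all pseudo-reflections of $G$, apply the already-established forward direction to $W$ to see $S^W$ is polynomial, and compare Hilbert series of the graded inclusion $R \subseteq S^W$ (both polynomial rings of Krull dimension $n$) to force $S^W = R$, whence $W = G$ by faithfulness of the action of $G$ on $S$. For part (b), once $R$ is polynomial the $f_i$ form a homogeneous system of parameters of the regular ring $S$, so they are a regular sequence, and $S$ becomes free over $R$. To pin down the module structure, I would identify $A \cong kG$ as graded $G$-representations using Molien's character-valued Hilbert series: the multiplicity of each irreducible $\chi$ in $A$ comes out to $\dim \chi$, matching the regular representation. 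The graded Nakayama lemma then lifts a homogeneous $k$-basis of $A$ to a free homogeneous $R$-basis of $S$ respecting the $G$-action, yielding the asserted isomorphism $S \cong R \otimes_k kG$ of $R$-modules.
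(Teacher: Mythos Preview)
The paper does not prove this theorem; it is quoted as background with references to \cite{Che} and \cite[Thm.~6.19]{OTe}. So there is no argument in the paper to compare against, and your sketch is being measured only against the classical literature it cites.

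Your outline is the standard Chevalley route and is fine for the forward direction of (a) and for (b). There is, however, a genuine gap in your converse for (a): knowing that $R=S^{G}$ and $S^{W}$ are both polynomial rings of Krull dimension $n$ with $R\subseteq S^{W}$ does \emph{not} force $S^{W}=R$ by a Hilbert-series comparison alone (think of $k[x^{2},y^{2}]\subseteq k[x,y]$, or more generally $\prod(1-t^{d_i})/\prod(1-t^{e_i})$ can perfectly well be a nontrivial polynomial). The missing ingredient is a ramification argument: $W$ is normal in $G$, the quotient $G/W$ acts on $\Spec S^{W}$ with no inertia in codimension one (all pseudo-reflections of $G$ already lie in $W$), hence $\Spec S^{W}\to\Spec R$ is unramified in codimension one; since both schemes are regular, purity of the branch locus makes the map \'etale, and simple connectedness of $\Spec R\cong\AA^{n}$ then forces $G/W=1$. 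With that step inserted, your plan matches the proofs in the sources the paper cites.
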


The polynomials $f_i$ in the theorem are called \emph{basic invariants} of $G$. They are not unique, but their degrees $\deg f_i=d_i$ are uniquely determined by the group $G$. 

\subsection{Discriminant and hyperplane arrangement} Let $G \subseteq \GL(V)$ be any finite subgroup. Then we denote by $V/G$ the quotient by this action. It is an affine variety given by $V/G=\Spec(S^G)$, see e.g.~\cite[Chapter 7]{CLO}. In general $V/G$ is singular, in particular, if $G$ is a small subgroup of $\GL(V)$. If $G$ is a pseudo-reflection group, then by Thm.~\ref{Thm:CST}, $V/G$ is smooth and isomorphic to $V \cong k^n$. We have a natural projection:
$$\pi: V\cong \Spec(S) \longrightarrow V/G \cong \Spec(S^G) \ .$$
On the ring level, this corresponds to the natural inclusion of rings
$$ S^G \cong k[f_1, \ldots, f_n] \hookrightarrow S=k[x_1, \ldots, x_n] \ . $$

 Now let us define two of the central characters in our story: Let $G$ be a finite complex reflection group. The set of mirrors of $G$ is called the \emph{reflection arrangement of $G$}, denoted by $\cala(G)$. Note that $\cala(G) \subseteq V$. Let $H \subseteq V$ be one of these mirrors, that is, $H$ is fixed by a cyclic subgroup generated by a pseudo-reflection $g_H \in G$ of order $\rho_H > 1$, and let $\{l_H =0\}$ be the defining linear equation of $H$. Then the reflection arrangement is given by the (reduced) equation $z:=\prod_{H \subseteq \cala(G)}l_H=0$. The image of the reflection arrangement $\cala(G)$ under the projection $\pi: V \rightarrow V/G$ is called the \emph{discriminant of $G$} and given by a reduced polynomial $\Delta$. Note that $\Delta \in S^G$, whereas $z \not \in S^G$.\\
One can express the discriminant in terms of the basic invariants of $G$: first, one can show that 
$$ J= Jac(f_1, \ldots, f_n)=\det \left(\frac{\partial f_i}{\partial x_j}\right)_{i,j=1, \ldots, n} = u \prod_{H \subseteq \cala(G)}l_H^{\rho_H -1} \ ,$$
where $u \in k^*$. The Jacobian $J$ is an anti-invariant for $G$, that is, $g J = (\det g)^{-1} J$ for any $g \in G$. The discriminant $\Delta$ is then given by
$$ \Delta = Jz = \prod_{H \subseteq \cala(G)}l_H^{\rho_H} \ , $$
which is an element of $S^G$. \\
In the case when $G$ is a true reflection group, we have $\rho_H=2$ for all $H$ and thus $z=J$, and the computation of $\Delta$ simplifies to $\Delta=z^2$. \\

Let us conclude this section with some (non-)examples: 

\begin{example} Let $G \subseteq \SL(V)$, $\dim V=2$, be the cyclic group $C_3$ generated by $\begin{pmatrix} \zeta_3  & 0 \\ 0 & \zeta_3^{-1} \end{pmatrix}$, where $\zeta_3$ is a third root of unity acting on $S=k[x,y]$. Then $V/G$ is a Kleinian surface singularity, denoted by $A_2$. The invariants of $G$ are $u=x^3+y^3$, $v=xy$, $w=x^3-y^3$ and satisfy the relation $w^2-u^2+4v^3$. Thus (after a change of coordinates) $S^G \cong k[u,v,w]/(w^2+u^2+v^3)$. A real picture of $V/G$ can be seen in Fig.~\ref{Fig:A3-nc}. 
\end{example}

\begin{example} \label{Ex:ADEcurves}
Let $G \subseteq \GL(V)$, $\dim V=2$, be a true reflection group. Then the discriminant $\Delta$ is an ADE curve. This can be seen for example by considering $\Gamma:=G\cap \SL_2(V)$. The subgroup $\Gamma$ is of index $2$ in $G$ and $G/\Gamma \cong \mu_2=\langle \sigma \rangle$, where $\sigma$ is a reflection in $G$. The invariant ring $S^\Gamma$ gives rise to an ADE-surface singularity $X:=V/\Gamma$ (cf.~Thm.~\ref{Thm:Klein}). The quotient $X/\langle \sigma \rangle \cong V/G$ is smooth and the branch locus of the natural projection $X \xrightarrow{} X/\langle \sigma \rangle$, is isomorphic to an ADE curve singularity, that is the discriminant $\Delta$ of $G$.  See \cite[Section 3]{KnoerrerCurves} for a more thorough explanation.  \\
In \cite{Bannai} the discriminants of rank two complex reflection groups are calculated explicitly and one sees that indeed all these discriminants are curves of type ADE.
\end{example}

\begin{example}
Let $G=(\mu_2)^n$ be acting on $k^n$ by reflecting about the coordinate hyperplanes. Thus the generators of $G$  are the diagonal matrices
$$S_i = \begin{pmatrix} 1 & 0 & 0 & 0 & 0 \\
0 & \ddots & 0 &0 &0 \\
0 & 0 & -1 &0 &0 \\
0 & 0 & 0 & \ddots & 0 \\
0 & 0 & 0 & 0 & 1
\end{pmatrix} \ .
$$
The hyperplane arrangement $\cala(G)$ is the union of the coordinate hyperplanes. This can easily be seen computationally: The basic invariants in this example are $f_i=x_i^2$, that is, $S^G=k[x_1^2, \ldots, x_n^2]$. Thus $Jac(f_1, \ldots, f_n)=2^n x_1\cdots x_n$ and we can take $z=J=x_1\cdots x_n$. Since $G$ is a true reflection group, $\Delta = z^2 = x_1^2 \cdots x_n^2 =f_1 \cdots f_n$. The coordinate ring of the discriminant is $S^G/(\Delta)=k[f_1, \ldots, f_n]/(f_1 \cdots f_n)$. In this example, the discriminant is also the union of coordinate hyperplanes, but now seen in the quotient $V / G$. See Fig.~\ref{Fig:A3-nc} for a picture in in the $3$-dimensional case.
 \end{example}
 
\begin{figure}[!h]  
\begin{tabular}{c@{\hspace{1.cm}}c}
\includegraphics[width=0.4 \textwidth]{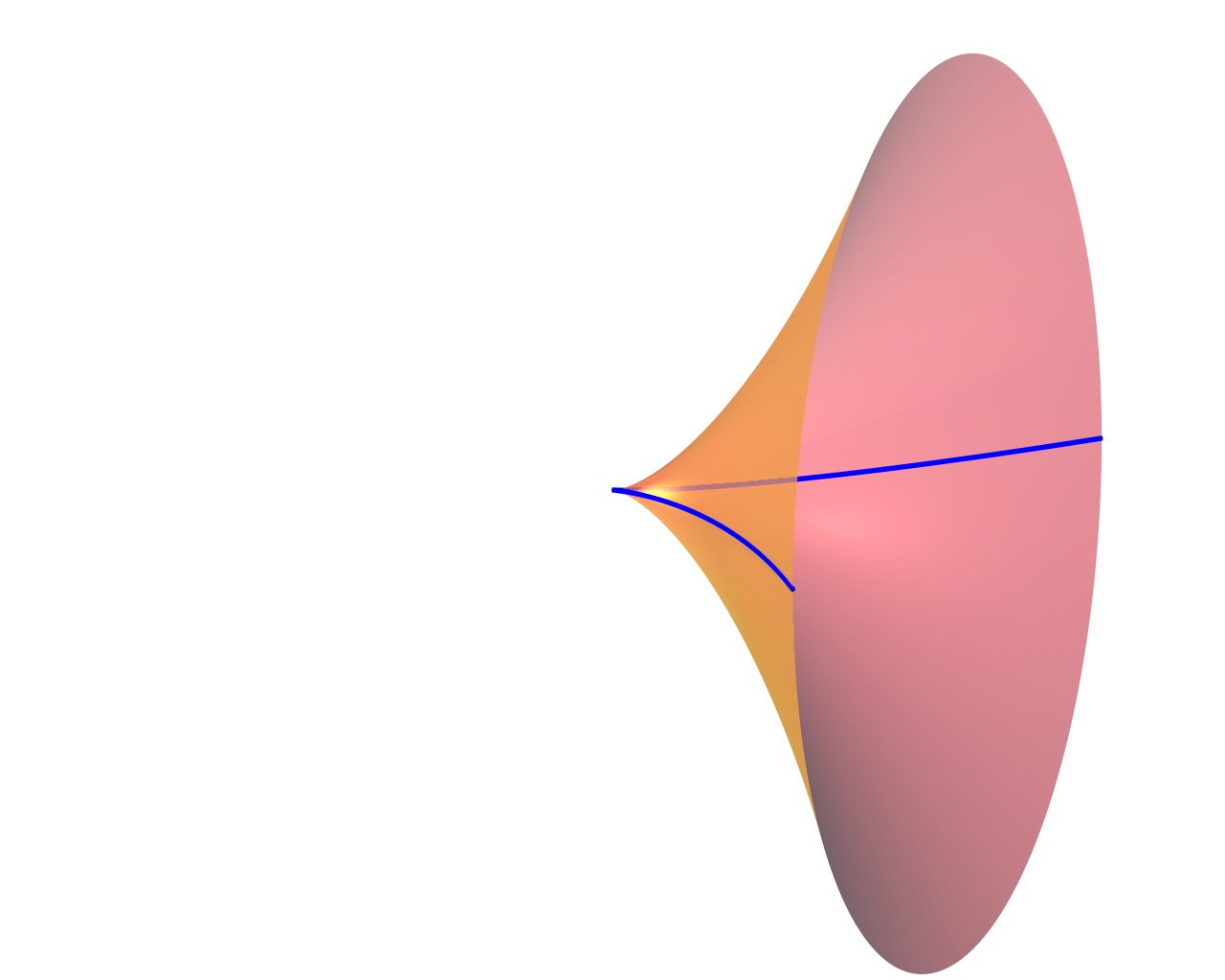}& 
\includegraphics[width=0.4 \textwidth]{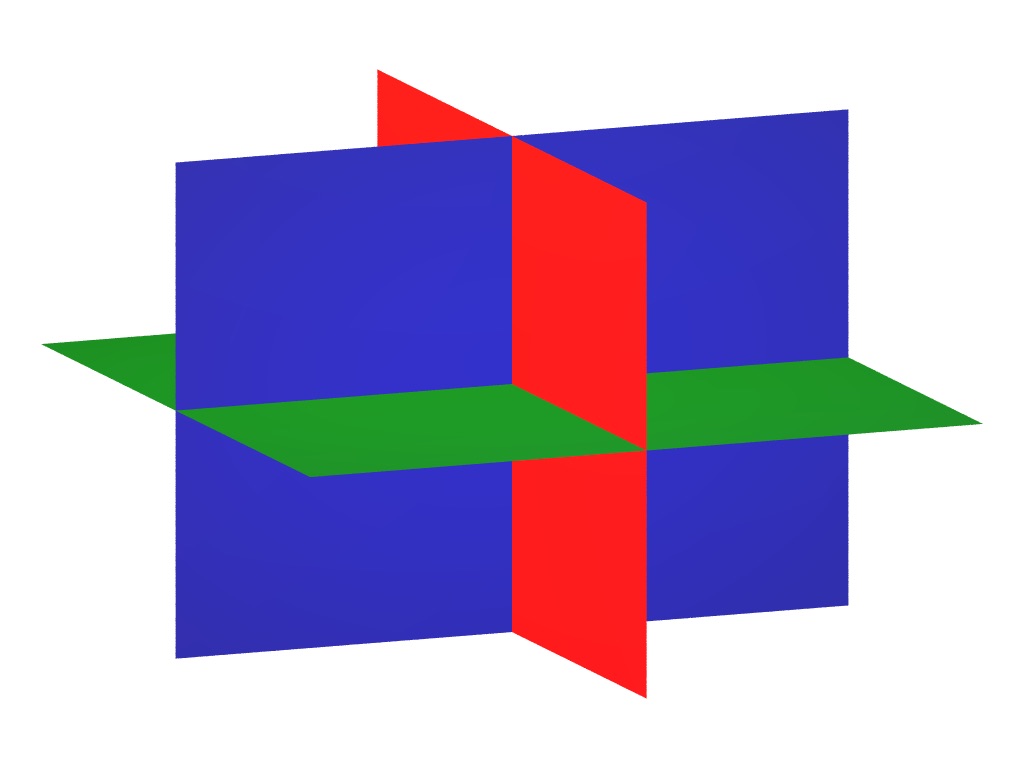}\end{tabular}
\caption{The $A_2$-singularity (left) and the discriminant of $\mu_2^3$.  (right).}
 \label{Fig:A3-nc}
\end{figure}
 
\begin{example}
Let $G=G(2,1,3)$, that is,  the Coxeter group $B_3:=S_3 \ltimes (\mu_2)^3$ (which is also isomorphic to the direct product $S_4 \times \mu_2$) acting on $k^3$ by permutation and sign changes. The invariants of $S=k[x_1,x_2,x_3]$ under this group action are $u=x_1^2+x_2^2+x_3^2$, $v=x_1^4+x_2^4+x_3^4$ and $w=x_1^6+x_2^6+x_3^6$. Since $G(2,1,3)$ is a Coxeter group, it is generated by true reflections, and thus $z=J$. A calculation of the Jacobian determinant of $(u,v,w)$ shows that 
$$ z=J= x_1x_2x_3(x_1^2-x_2^2)(x_1^2-x_3^2)(x_2^2-x_3^2).$$
The invariant ring is $S^G=k[u,v,w]$ and an investigation of $z^2$ shows that it is equal to the product of $w$ with the discriminant of the polynomial $P(t)=t^3+ut^2+vt+w$, see \cite{YanoSekiguchi}. The equation for the discriminant is
$$ \Delta= w(u^2v^2-4v^3-4u^3w+18uvw-27w^2).$$
See Fig.~\ref{Fig:B3} below for pictures of the hyperplane arrangement, given by $z$ and the discriminant $\Delta$ in $\RR^3$. 
\end{example}

\begin{figure}[!h]  
\begin{tabular}{c@{\hspace{1.cm}}c}
\includegraphics[width=0.4 \textwidth]{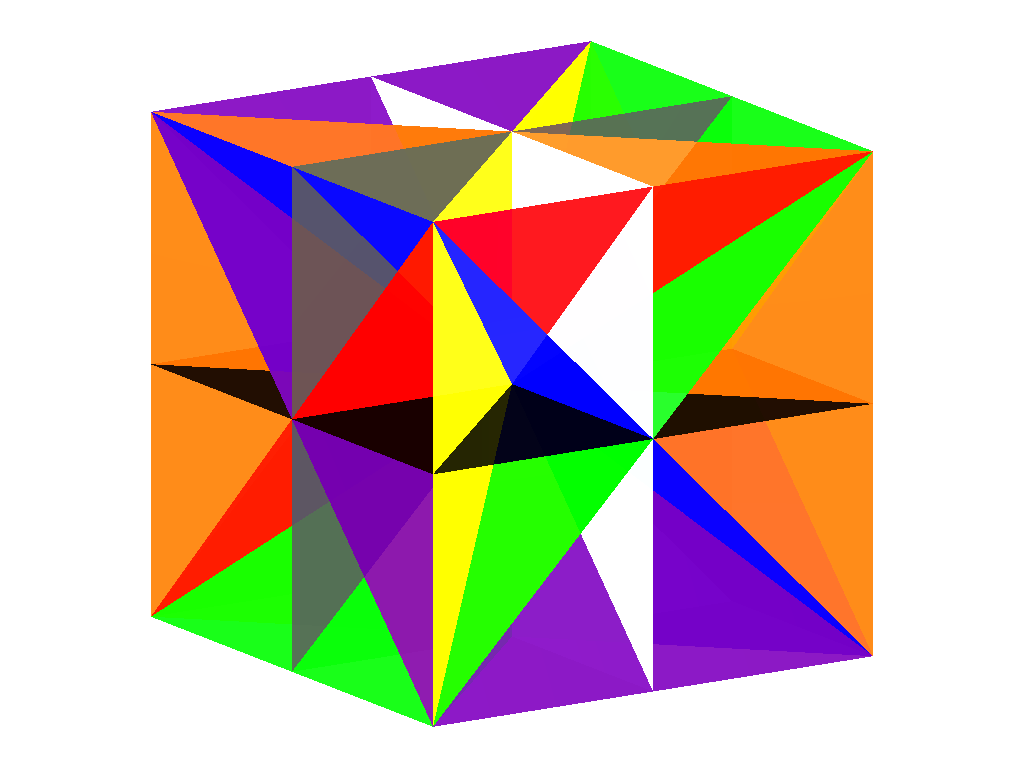}& 
\includegraphics[width=0.4 \textwidth]{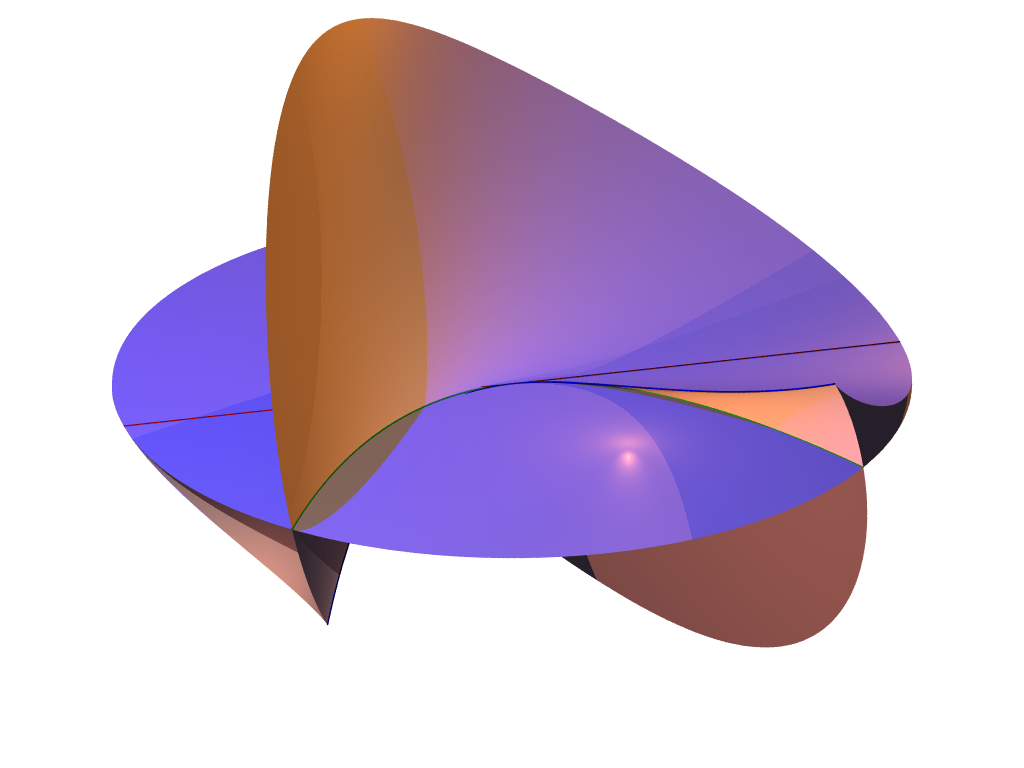}\end{tabular}
\caption{The $B_3$-reflection arrangement (left) and the corresponding discriminant with equation $w(u^2v^2-4v^3-4u^3w+18uvw-27w^2)$  (right).}
 \label{Fig:B3}
\end{figure}

\section{(Noncommutative) resolutions of singularities}

Here we come to the geometric part. Let $X$ be an affine algebraic variety, that is $X=\Spec(R)$, where $R$ is a commutative finitely generated $k$-algebra. If $R$ is not a regular ring, then we say that $X$ is a \emph{singular} algebraic variety and denote by $\Sing X$ its \emph{singular locus}. A \emph{resolution of singularities of $X$} is a proper birational map $\pi: \widetilde X \rightarrow X$ from a smooth scheme $\widetilde X$ to $X$ such that $\pi$ is an isomorphism on the smooth points of $X$.   \\
The central result regarding resolution of singularities is

\begin{theorem}[Hironaka \cite{Hi}]
Let $X$ be a scheme over a field of characteristic $0$ (e.g., $X$ an affine algebraic variety defined over $\CC$). Then there exists a resolution of singularities $\pi: \widetilde X \rightarrow X$.
\end{theorem}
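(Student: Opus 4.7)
The plan is to construct $\widetilde X$ as a finite composition of blowups along smooth centers contained in the successive singular loci. Since resolution is local in the Zariski topology and the construction below will be canonical (functorial for smooth morphisms), I would first reduce to the affine local case, where $X$ embeds as $V(\cali) \subseteq M$ in a smooth ambient variety $M$, and then reduce desingularization of $X$ to the \emph{principalization problem}: produce a proper birational $\pi \colon \widetilde M \to M$, obtained as an iterated blowup with smooth centers, such that $\pi^{-1}\cali \cdot \calo_{\widetilde M}$ is a locally principal monomial ideal in the exceptional divisors. The strict transform of $X$ then becomes smooth precisely when the order of $\cali$ at every point has dropped to zero.

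The engine of the argument, following the Bierstone--Milman / W\l odarczyk / Koll\'ar strategy, is a \emph{resolution invariant} $\mathrm{inv}_X \colon X \to \Gamma$ into a well-ordered set $\Gamma$ satisfying: (i) $\mathrm{inv}_X$ is upper semi-continuous, so its maximum locus is closed; (ii) this maximum locus is smooth and hence admissible as a blowup center; and (iii) after blowing up this center, the maximum value of the invariant on the transform strictly decreases. The invariant is a lexicographic concatenation built from the order of $\cali$ at a point together with auxiliary ideals obtained by restricting to \emph{hypersurfaces of maximal contact}, that is, smooth hypersurfaces through the point that contain every sufficiently singular point after any admissible sequence of blowups. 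Existence of maximal contact lets one descend one dimension at a time, so the whole proof proceeds by double induction on the ambient dimension and on the maximum value of the invariant, terminating because $\Gamma$ is well-ordered.

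The hardest step, and the one that forces the characteristic zero hypothesis, is the verification that maximal contact exists and that the invariant genuinely drops after each blowup. If $f \in \cali$ has order $m$ at $p$, one seeks a derivative $\partial f / \partial x_i$ of order exactly $m - 1$ at $p$; its vanishing locus then provides a candidate hypersurface of maximal contact. This order drop by one is automatic in characteristic zero, but can fail in positive characteristic, which is precisely why the analogous statement in positive characteristic remains open in dimension $\geq 4$. Granting maximal contact and the strict decrease of the invariant, canonicity of the construction allows the local resolutions to glue to the desired global $\widetilde X \to X$.
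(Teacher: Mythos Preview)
The paper does not prove this theorem at all: it is stated with attribution to Hironaka and the citation \cite{Hi}, and the surrounding text only remarks that one may impose extra conditions on $\pi$ (embedded resolution, normal crossings exceptional locus, composition of blowups in smooth centers), referring to \cite{Ha4} for more. In an expository article of this kind, Hironaka's theorem is a black box, and there is nothing in the paper to compare your argument against.

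Your sketch follows the modern Bierstone--Milman / Encinas--Villamayor / W\l odarczyk / Koll\'ar line rather than Hironaka's original 1964 argument, and as a high-level outline it is accurate: reduce to principalization of an ideal in a smooth ambient space, define an upper-semicontinuous invariant valued in a well-ordered set, blow up its smooth maximum locus, and use maximal contact plus induction on ambient dimension to show the invariant strictly drops. Two small caveats. First, the sentence ``the strict transform of $X$ then becomes smooth precisely when the order of $\cali$ at every point has dropped to zero'' is imprecise: in the principalization approach the strict transform is resolved at the step where its order drops to the generic order (equivalently, just before it becomes empty), not when the total transform has order zero. Second, what you have written is an outline rather than a proof; each of the three properties of the invariant (semicontinuity, smoothness of the maximum locus, strict decrease after blowup) requires substantial work, and the well-definedness of the invariant independent of the choice of hypersurface of maximal contact is itself a delicate point. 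For the purposes of this paper, simply citing \cite{Hi} (or one of the modern treatments) is what is expected.
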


One can impose some extra conditions on $\pi$,  for example that $\pi^{-1}(\Sing(X))$ is a normal crossings divisor in $\widetilde X$ and $\pi$ is a composition of blowups in smooth centers, or (in the case $X$ an irreducible reduced variety over $k$), that $X$ has a so-called embedded resolution, see \cite{Ha4} for more on this. So existence of resolutions of singularities is (with our assumption that $k = \CC$, that is, $k$ of characteristic $0$) not an issue. However, from an algebraic point of view, it would be desirable to have some control over  $\widetilde X$. If $X$ is of Krull-dimension $2$, then there exists a unique \emph{minimal resolution of singularities}, that is, a resolution $Y$ of $X$ such that any other resolution $\widetilde X$ of $X$ factors through $Y$. This is false in higher dimensions, in general there does not even exist a minimal resolution. \\
For some Cohen--Macaulay varieties $X$ (that is: if $X$ is locally $\Spec(R)$, then $R$ is a Cohen--Macaulay ring) there still exist preferred resolutions of singularities: namely those, that do not affect the canonical class of $X$. A resolution of singularities $\pi: \widetilde X \rightarrow X$ is called \emph{crepant} if $\pi^*\omega_X = \omega_{\widetilde X}$. Crepant  resolutions first appeared in the minimal model program, where the term ``crepant'' was coined by M.~Reid to reflect the fact that the pull-back of the canonical sheaf of  $X$ does not pick up discrepancy, see e.g.~\cite{ReidBourbaki}. Crepant resolutions still do not always exist and are usually not isomorphic to each other, but at least their cohomologies should be the same, see \cite{BondalOrlov}:

\begin{conjecture}[Bondal--Orlov] Let $Y$ and $Y^+$ be two crepant resolutions of an algebraic variety $X$. Then there is a derived equivalence of categories
$$D^b(\Coh Y) \simeq D^b(\Coh Y^+) \ ,$$
where $\Coh(-) $ denotes the category of coherent sheaves.
\end{conjecture}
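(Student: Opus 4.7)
The conjecture is famously open in general (established in dimension three by Bridgeland, and in many special families by Kawamata, Van den Bergh, Chen and others), so the plan below is only a strategy. In the noncommutative spirit of this paper, the natural approach is to produce a single noncommutative algebra $\Lambda$ that is derived equivalent to \emph{both} $Y$ and $Y^+$, and then read off the desired equivalence by composition through this third object.

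Concretely, the first step is to construct a tilting complex $\calt$ on $Y$ whose endomorphism algebra $\Lambda=\End_Y(\calt)$ satisfies, via the Bondal--Van den Bergh tilting theorem, a derived equivalence $\RHom_Y(\calt,-)\colon D^b(\Coh Y)\xrightarrow{\sim} D^b(\fmod \Lambda)$. Writing $X=\Spec R$ and pushing $\calt$ forward along $\pi\colon Y\to X$ should realise $\Lambda$ as $\End_R(M)$ for some reflexive $R$-module $M$. Crepancy of $\pi$ (i.e. $\pi^{*}\omega_X\cong\omega_Y$) is the crucial input here: it forces this endomorphism ring to be Cohen--Macaulay over $R$ and of finite global dimension, that is, a noncommutative crepant resolution of $X$ in the sense of Van den Bergh. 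The second step is to repeat the construction on $Y^+$ and compare the two resulting algebras: ideally they are Morita equivalent, so that the two geometric derived categories land in the same module category, but it is enough to connect them by a zig-zag of tilting equivalences induced by a chain of elementary flops relating $Y$ and $Y^+$, realised by changes of stability on a common moduli space (Bridgeland's Fourier--Mukai kernel for a flop).

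The principal obstacle is the existence of the tilting object on a general crepant resolution. Such objects are furnished in dimension three by $G$-Hilbert schemes and by moduli of perverse point sheaves (Bridgeland--King--Reid, Van den Bergh), and this is what makes the three-dimensional case of the conjecture tractable. In higher dimensions, the construction of a tilting bundle is exactly where the argument becomes genuinely hard. A secondary obstacle is comparing the two NCCRs once they exist: without an a priori flop-chain joining $Y$ and $Y^+$, one cannot propagate a single $\Lambda$ across different crepant models, and the minimal model program does not yet supply the required connectedness statement in dimension $\geq 4$. If both issues can be resolved, however, assembling the derived equivalence $D^b(\Coh Y)\simeq D^b(\Coh Y^+)$ through $D^b(\fmod \Lambda)$ is then formal.
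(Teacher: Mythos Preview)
The paper does not prove this statement: it is labelled and treated as a \emph{conjecture}, and the surrounding text only remarks that Bridgeland established the three-dimensional case (for complex threefolds with terminal singularities) building on Bridgeland--King--Reid. There is therefore no ``paper's own proof'' to compare against.

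You correctly recognise this and offer a strategy rather than a proof. Your outline --- produce a tilting object on each crepant resolution, pass to an NCCR $\Lambda=\End_R(M)$, and bridge the two geometric categories through $D^b(\fmod\Lambda)$ --- is precisely the Van den Bergh philosophy that the paper invokes immediately after stating the conjecture, so your framing is well aligned with the exposition. Your identification of the two genuine obstructions (existence of tilting objects on arbitrary crepant resolutions, and comparison of distinct NCCRs in the absence of a flop chain) is accurate; these are exactly why the conjecture remains open in dimension $\geq 4$. No correction is needed beyond noting that what you have written is a roadmap, not a proof, and the paper makes no stronger claim.
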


Building on work by Bridgeland--King--Reid for $3$-dimensional quotient singularities \cite{BKR}, Bridgeland proved  \cite{Bridgeland02} the conjecture for complex $3$-dimensional varieties with only terminal singularities. Inspired by their work, Van den Bergh defined a noncommutative crepant resolution (=NCCR) \cite{VandenBergh04}, also see \cite{vandenBerghflops04}: if $X=\Spec(R)$, then the main point in their approach is to construct a (non-commutative) $R$-algebra such that for any crepant resolution $\pi: Y \rightarrow X$ one has $D^b(\Coh Y) \simeq D^b(\fmod A)$. Here $\fmod A$ denotes the category of finitely generated $A$-modules. The algebra $A$ is thus a noncommutative analogue of a crepant resolution of singularities, a NCCR. Later, NCCRs were studied and constructed in various instances, see e.g., \cite{BLvdB10,IyamaWemyss10}. The more general concept of noncommutative resolution (=NCR) was defined by Dao--Iyama--Takahashi--Vial \cite{DaoIyamaTakahashiVial} in 2015. Unfortunately there is (so far) not a good theory about general existence and properties of NCRs, only mostly examples, see e.g. \cite{BurbanIyamaKellerReiten, DFI, DaoIyamaWemyssetc, DoFI, Leuschke07}. A good introduction to NCCRs and categorical geometry with many references can be found in \cite{Leuschke12}. \\ 
But let us state the definitions:

\begin{defn}
Let $R$ be a commutative reduced noetherian ring. An $R$-algebra $A$ is called a \emph{noncommutative resolution (NCR)} of $R$ (or of $\Spec(R)$) if $A$ is of the form $\End_R M$ for $M$ a finitely generated, faithful $R$-module and $\gl (A) < \infty$. The algebra $A$ is called a \emph{noncommutative crepant resolution (NCCR)} of $R$ (or of $\Spec(R)$) if $A$ is a nonsingular order, that is, $\gl A_\fp = \dim R_\fp$ for all $\fp \in \Spec(R)$ and $A$ is a maximal Cohen--Macaulay module over $R$.
\end{defn}

\begin{remark}
In Van den Bergh's original definition, $R$ was assumed to be a commutative normal Gorenstein domain. The reasoning behind this more general definition can be found in \cite{DFI}.
\end{remark}

In general it is not clear how to construct NCRs and whether they exist. However, there is one beautiful example, which will lead to our construction of NCRs for discriminants of reflection groups.

\section{The classical McKay correspondence}

Here we give brief sketches of the components of the classical McKay correspondence. Up to this date, the correspondence has been generalized in various directions, for an account and more detailed references see \cite{BuchweitzMFO}. 
\subsection{Dual resolution graphs} Let $\Gamma \subseteq \SL(V)$ be a finite subgroup acting on $V \cong k^2$.  Denote as above by $S=k[x,y]$ the symmetric algebra of $V$  and the invariant ring $S^\Gamma=R$. Then $X:=V/\Gamma=\Spec(R)$ is a quotient singularity. More precisely, these quotient singularities (aka \emph{Kleinian singularities}) have been classified by Felix Klein \cite{Klein}:

\begin{theorem}[Klein 1884] \label{Thm:Klein}
Let $\Gamma$ and $X$ be as above. Then $X$ is isomorphic to an isolated surface singularity of type ADE, that is, $R \cong k[x,y,z]/(f)$: 
\begin{itemize}
\item $A_n$: \ $f=z^2 + y^2 +x^{n+1}$,  if $\Gamma$ is a cyclic group,
\item $D_n$: \ $f=z^2 + x(y^2+x^{n-2})$ \ for $n \geq 4$, if $\Gamma$ is a binary dihedral group,
\item $E_6$: \ $f=z^2 +x^3 +y^4$, if $\Gamma$ is the binary tetrahedral group,
\item $E_7$: \ $f=z^2 +x(x^2+y^3)$, if $\Gamma$ is the binary octahedral group, 
\item $E_8$: \ $f=z^2 +x^3 + y^5$, if $\Gamma$ is the binary icosahedral group.
\end{itemize}
\end{theorem}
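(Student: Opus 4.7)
The proof naturally splits into a group-theoretic part (classifying the possible $\Gamma$) and a computational part (identifying the invariant ring in each case).

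\emph{Step 1: Classification of finite subgroups of $\SL(2,\CC)$ up to conjugacy.} Since $\Gamma$ is finite, averaging the standard Hermitian form over $\Gamma$ produces a $\Gamma$-invariant positive definite Hermitian form on $V$, so after conjugation we may assume $\Gamma \subseteq \SU(2)$. The plan is then to use the classical double cover $\SU(2) \twoheadrightarrow \SO(3)$ with kernel $\{\pm I\}$. The image $\overline\Gamma \subseteq \SO(3)$ is a finite subgroup of rotations of $\RR^3$, and these were classified by Klein: cyclic $C_m$, dihedral $D_m$, or one of the symmetry groups $T, O, I$ of the Platonic solids. Lifting back gives $\Gamma$ as the preimage of $\overline\Gamma$, which in the dihedral/polyhedral cases yields the binary dihedral/tetrahedral/octahedral/icosahedral groups (each of order twice that of the quotient), while if $-I \notin \Gamma$ then $\Gamma$ maps isomorphically to $\overline\Gamma$, which forces $\overline\Gamma$ cyclic of odd order, so that $\Gamma$ itself is cyclic. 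Either way $\Gamma$ is one of the five types listed.

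\emph{Step 2: Computing $S^\Gamma$ via Klein's method.} For the cyclic group of order $n+1$, diagonal generators show directly that $S^\Gamma = k[x^{n+1}, y^{n+1}, xy]$, and the unique relation among these generators is $u v = w^{n+1}$; a linear change of coordinates converts this to $z^2 + y^2 + x^{n+1} = 0$, the $A_n$ equation. For the binary polyhedral groups I would use Klein's geometric construction: the image $\overline\Gamma$ acts on $S^2 \cong \PP^1$ with precisely three exceptional orbits (the vertices, edge-midpoints, and face-centers of the corresponding Platonic solid), and the homogeneous polynomials $F_1, F_2, F_3 \in S$ vanishing to order one on the preimages of these orbits are $\Gamma$-semi-invariants whose appropriate powers $F_i^{e_i}$ are $\Gamma$-invariant. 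The degrees of these generators are determined by the sizes of the exceptional orbits.

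\emph{Step 3: Deriving the ADE equation.} Since $S^\Gamma$ has transcendence degree $2$, the three generators $F_1^{e_1}, F_2^{e_2}, F_3^{e_3}$ must satisfy a single polynomial relation; one checks by degree/weight considerations (or by explicit computation with, say, the icosahedral forms built from the $12$ vertex linear forms) that this relation has precisely the stated ADE form, and that the corresponding hypersurface has an isolated singularity at the origin. To confirm the result is exactly $R$ and not a larger ring, one can invoke that $\Gamma \subseteq \SL(V)$ forces $R$ to be Gorenstein of dimension $2$, hence a hypersurface (by Serre's criterion applied to the three generators), so the computed relation defines the full invariant ring.

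\emph{Main obstacle.} Step 1 is textbook once one passes to $\SU(2)$ and $\SO(3)$. The genuine work is in Step 2 for the three polyhedral cases: writing down the semi-invariant forms $F_i$ explicitly (starting from the vertices of a tetrahedron/octahedron/icosahedron inscribed in $S^2$), verifying that they are $\Gamma$-semi-invariant with the claimed characters, and then identifying the single relation among their invariant powers. The icosahedral case in particular involves nontrivial algebra with forms of degrees $12, 20, 30$, and matching the resulting relation to the normal form $z^2 + x^3 + y^5$ requires a carefully chosen linear change of variables. Once these computations are in hand, rewriting the relations in the stated normal forms is routine.
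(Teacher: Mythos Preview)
The paper does not actually prove this theorem: it is stated as a classical result, attributed to Klein and cited to \cite{Klein}, and the exposition moves on immediately to dual resolution graphs. So there is no ``paper's proof'' to compare against; your sketch is precisely the classical argument Klein carried out, and it is the expected route.

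Your outline is essentially sound, but one step deserves tightening. In Step~3 you write that ``$\Gamma \subseteq \SL(V)$ forces $R$ to be Gorenstein of dimension $2$, hence a hypersurface (by Serre's criterion applied to the three generators)''. Gorenstein of dimension $2$ does not by itself imply hypersurface, and Serre's criterion is about normality, not about counting defining equations. What you actually need is that the three semi-invariant powers \emph{generate} $R$ as a $k$-algebra; once that is known, Krull dimension $2$ forces a single relation. The standard way to see generation is either via Molien's series (comparing Hilbert series) or by Klein's direct analysis of the grading and the characters of the semi-invariants. This is exactly the ``genuine work'' you flag in your Main obstacle paragraph, so you are aware of it --- just don't lean on the Gorenstein property to shortcut it.

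A minor point in Step~1: the case analysis of whether $-I \in \Gamma$ is slightly delicate. You assert that if $-I \notin \Gamma$ then the image in $\SO(3)$ must be cyclic of odd order; this is true, but it requires checking that none of the dihedral or polyhedral subgroups of $\SO(3)$ admit a lift to $\SU(2)$ avoiding $-I$, which comes down to the fact that the preimages of their order-$2$ rotations are elements of order $4$. Worth one sentence.
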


\begin{figure}[!h]   \label{Fig:ADEpics}
\begin{tabular}{c@{\hspace{1.cm}}c}
\includegraphics[width=0.4 \textwidth]{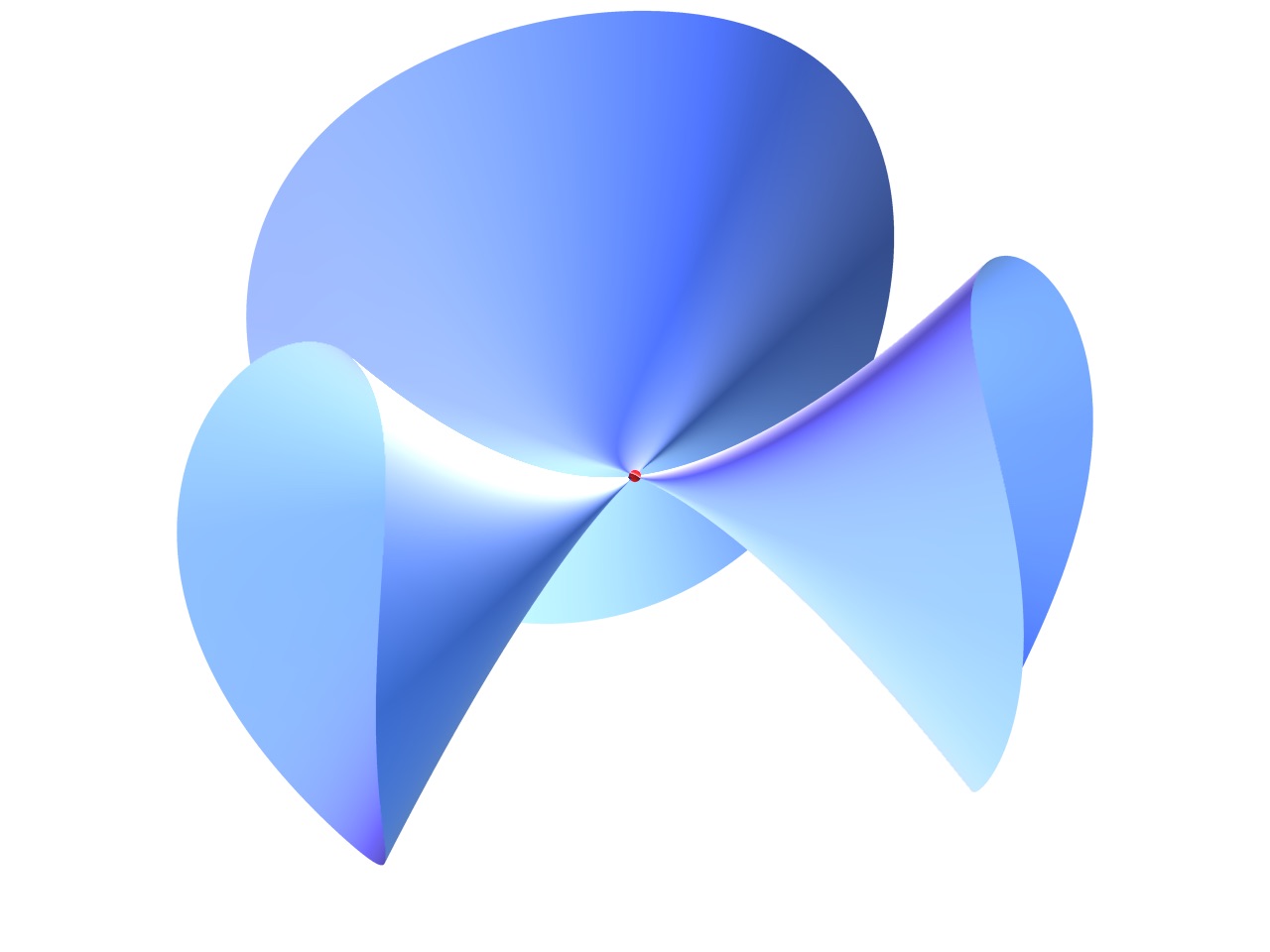}& 
\includegraphics[width=0.4 \textwidth]{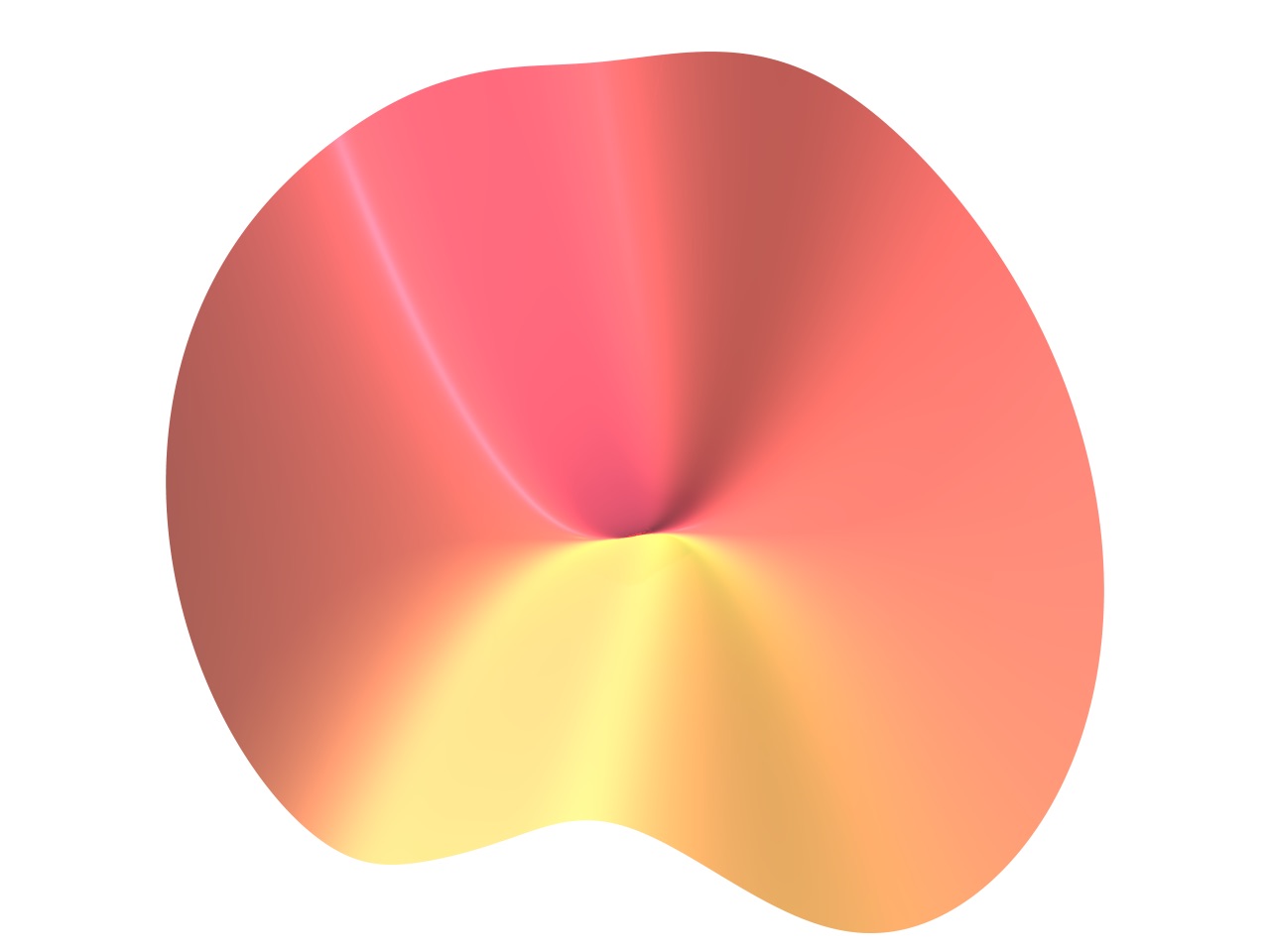}\end{tabular}
\caption{The $D_4$ -singularity with equation $z^2+x(y^2-x^2)=0$ (left) and the $E_8$-singularity with equation $z^2+x^3+y^5=0$ (right).}
\end{figure}

The Kleinian singularities $X$ are also classified by their so-called dual resolution graphs, which happen to be the corresponding ADE-Coxeter-Dynkin-diagrams (for a more detailed account of resolution of surface singularities and especially the rational double points, see \cite{Gre92}, \cite{Durfee}): consider the minimal resolution $\pi: \widetilde X \rightarrow X$ of a Kleinian singularity $X$. The minimal resolution exists, since $X$ is a normal surface singularity and $\widetilde X$ is moreover unique, see \cite{La}. One can for example obtain $\widetilde X$ by successively blowing up singular points. This process can be quite intricate, see for example the dessert in this menu \cite{FaberHauser10}. The preimage of $\{0\} = \Sing(X)$ in $\widetilde X$ will be denoted by $\mathbb{E}=\bigcup_{i=1}^n\EE_i$ and is a union of irreducible curves $\EE_i$ on $\widetilde X$ (this holds for example because one can construct $\widetilde X$ as an embedded resolution of $X \subseteq k^3$, which implies that the preimage of $\Sing X$ must be a normal crossing divisor). Since $X$ is a so-called  rational singularity, the irreducible components $\mathbb{E}_i$ of $\mathbb{E}$ are isomorphic to $\PP^1$'s and intersect each other at most pairwise transversally (this is to say: $\EE$ is a normal crossings divisor on $\widetilde X$). Now one can form the \emph{dual resolution graph} of $X$: the vertices are indexed by the $\mathbb{E}_i$, for $i=1,\ldots,n$ and there is an edge between $\mathbb{E}_i$ and $\mathbb{E}_j$ if and only if $\mathbb{E}_i \cap \mathbb{E}_j \neq \varnothing$. \\
Moreover, one defines the intersection matrix $E:=( (\EE_i \cdot \EE_j))_{i,j=1, \ldots, n}$. Computation shows that $(\mathbb{E}_i \cdot \mathbb{E}_i)=-2$ and $(\EE_i \cdot \EE_j) \in \{0,1\}$ for $i \neq j$ and that $E$ is symmetric and negative-definite. Moreover, there exist divisors  $Z'$ on $\widetilde X$ of the form $Z'=\sum_{i=1}^n m_i \EE_i$ supported on $\EE$ with $m_i > 0$, such that $Z' \cdot \EE_i \leq 0$ for all $i$. The smallest such divisor is denoted by $Z$ and called the \emph{fundamental divisor of $X$}. One decorates the vertices of the dual resolution graph with the multiplicities $m_i$ of $Z$. For an illustration of the $D_4$-singularity see Fig.~\ref{Fig:ADEpics}, and the resolution with the resolution graph are below in Fig.~\ref{Fig:D4resolution}. \\
\begin{figure}[!h]   
\begin{tabular}{c@{\hspace{1.cm}}c}
\includegraphics[width=0.4 \textwidth]{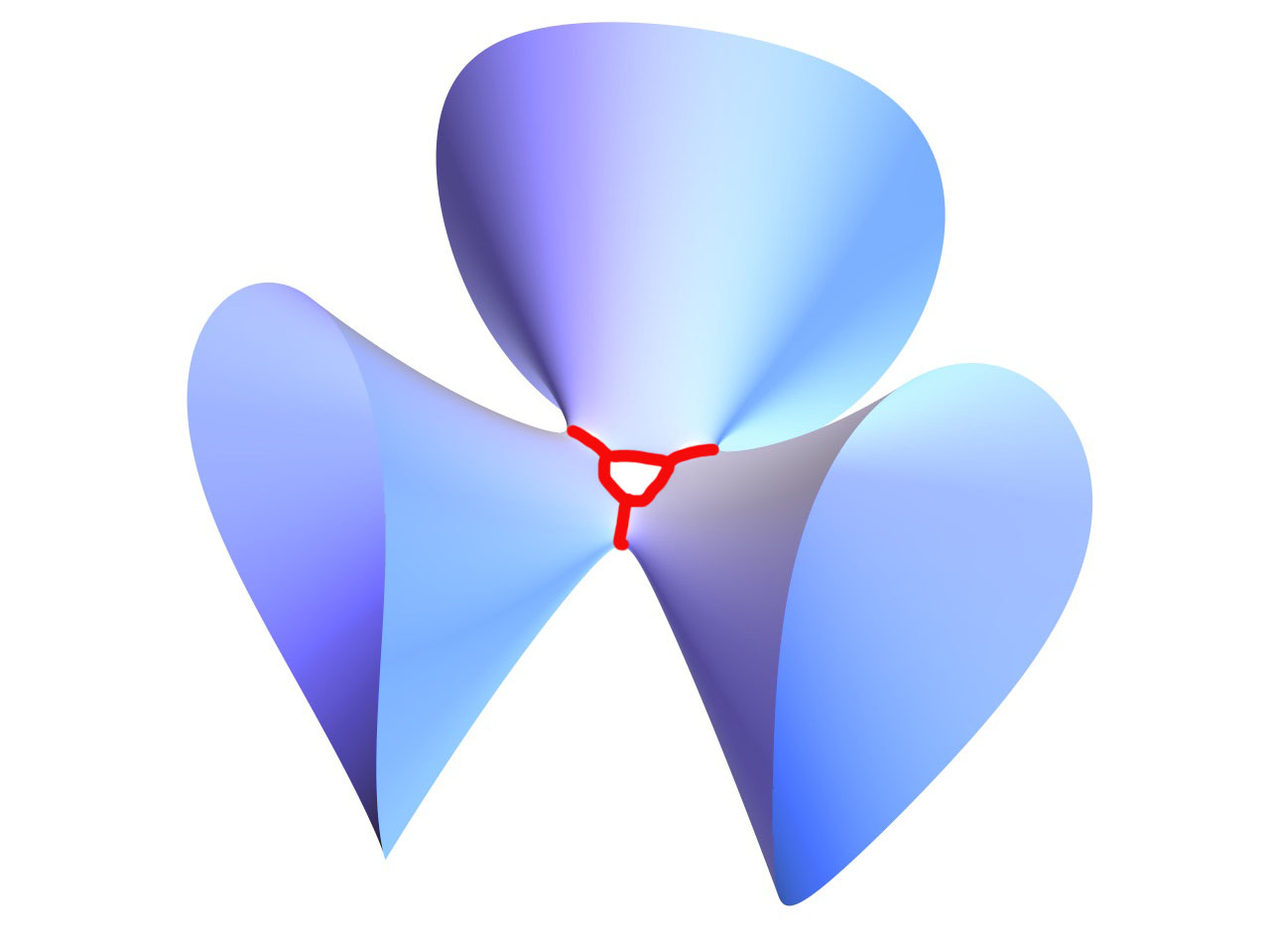}& 

$$
\begin{tikzpicture}
\node at (4,0) {\begin{tikzpicture} 
\node (C1) at (0,0)  {$\EE_4$};
\node (C2) at (1.75,0)  {$\EE_1$};
\node (C3) at (3.5,0)  {$\EE_2$};
\node (C4) at (1.75,1.75)  {$\EE_3$};
\node (C5) at (1.75,-1.75)  { };

\node (C4p) at (1.2,1.75)  {\rot{$1$}};
\node (C2p) at (1.75,-0.5)  {\rot{$2$}};
\node (C3p) at (3.5,-0.5)  {\rot{$1$}};
\node (C1p) at (0,-0.5)  {\rot{$1$}};

\draw [-,bend left=0,looseness=1,pos=0.5] (C1) to node[]  {} (C2);

\draw [-,bend left=0,looseness=1,pos=0.5] (C2) to node[]  {} (C3);

\draw [-,bend left=0,looseness=1,pos=0.5] (C4) to node[]  {} (C2);

\end{tikzpicture}}; 
\end{tikzpicture}
$$
\end{tabular}
\caption{The minimal resolution of the $D_4$-singularity (with the exceptional curves sketched in \rot{red}) and the dual resolution graph of type $D_4$ with the multiplicities of the fundamental divisor in \rot{red}. 
} \label{Fig:D4resolution}
\end{figure}

\subsection{McKay quiver} In 1979, John McKay built the following graph out of a finite subgroup $\Gamma \subseteq \SL(V)$, $\dim V=2$: the embedding of $\Gamma$ in $\SL(V)$ defines the so-called canonical representation $c: \Gamma \hookrightarrow \GL(V)$ of $\Gamma$. Any finite group only has finitely many isomorphism classes of finite dimensional irreducible $k$-representations, given by group homomorphisms $\rho_i: \Gamma \rightarrow \GL(V_i)$, $i=0, \ldots, n$ for vector spaces $V_i$. The trivial representation will be denoted by $\rho_0: \Gamma \rightarrow k^*$, sending any $\gamma$ to $1$. The \emph{McKay quiver of $\Gamma$} consists of the vertices indexed by the $V_i$ and there are $m_{ij}$ arrows from $V_i$ to  $V_j$ if and only if $V_i$ is contained with multiplicity $m_{ij}$ in $V_j \otimes V$. \\

\begin{example}
Consider the group $\Gamma$ generated by 
$$\pm \begin{pmatrix} 1 & 0 \\ 0 & 1 \end{pmatrix}, \pm \begin{pmatrix} i & 0 \\ 0 & -i \end{pmatrix}, \pm \begin{pmatrix} 0 & 1 \\ -1 & 0 \end{pmatrix}, \pm \begin{pmatrix} 0 & i \\ i & 0 \end{pmatrix}.$$
This is the binary dihedral group $D_4$. It has five irreducible representations $\rho_i$, four of which are one-dimensional and one two-dimensional $\rho_1$, which is the canonical representation $c$. Using character theory, one obtains the McKay quiver of $\Gamma$ (the dimensions of the irreducible representations in \rot{red}):
\[
\begin{tikzpicture}
\node at (4,0) {\begin{tikzpicture} 
\node (C1) at (0,0)  {$\rho_0$};
\node (C2) at (1.75,0)  {$\rho_1$};
\node (C3) at (3.5,0)  {$\rho_3$};
\node (C4) at (1.75,1.75)  {$\rho_4$};
\node (C5) at (1.75,-1.75)  {$\rho_2$};

\node (C1p) at (-0.5,0)  {\rot{$1$}};
\node (C2p) at (1.25,0.75)  {\rot{$2$}};
\node (C3p) at (4,0)  {\rot{$1$}};
\node (C4p) at (2.25,1.75)  {\rot{$1$}};
\node (C5p) at (2.25,-1.75)  {\rot{$1$}};

\draw [->,bend left=30,looseness=1,pos=0.5] (C1) to node[]  {} (C2);
\draw [->,bend left=30,looseness=1,pos=0.5] (C2) to node[] {} (C1);

\draw [->,bend left=30,looseness=1,pos=0.5] (C2) to node[]  {} (C3);
\draw [->,bend left=30,looseness=1,pos=0.5] (C3) to node[] {} (C2);

\draw [->,bend left=30,looseness=1,pos=0.5] (C2) to node[]  {} (C5);
\draw [->,bend left=30,looseness=1,pos=0.5] (C5) to node[] {} (C2);

\draw [->,bend left=30,looseness=1,pos=0.5] (C4) to node[]  {} (C2);
\draw [->,bend left=30,looseness=1,pos=0.5] (C2) to node[] {} (C4);

\end{tikzpicture}}; 
\end{tikzpicture}
\] 
\end{example}

The following is due to McKay \cite{McKay79}: \\
\emph{Observation:} Let $\Gamma$ be as above. Then the McKay quiver of $\Gamma$ is connected and contains no loops and $m_{ij}=m_{ji} \in \{0,1\}$. By calculation, one sees that the McKay quiver of $\Gamma$ is the extended Coxeter--Dynkin diagram for $\Gamma$, with arrows in both directions. If one deletes the vertex corresponding to the trivial representation and collapses all arrows $V_i \qurep{}{} V_j$ to an edge, then one obtains the Coxeter--Dynkin diagram associated to $\Gamma$. If one decorates the vertices $V_i$ with the dimensions $\dim V_i$, then one gets back the dual resolution graph of $V/\Gamma$. \\

Thus one obtains a $1-1$-correspondence between the irreducible components of the exceptional divisor of the minimal resolution of the quotient singularity $V/\Gamma$ and the non-trivial irreducible representations of $\Gamma$! This was made precise later by \cite{GonzalezSprinbergVerdier}.  \\

\subsection{AR-quiver} But there is also an algebraic part of the correspondence: Let again $R=S^\Gamma$ for $\Gamma \subseteq \SL(V)$, where $V \cong k^2$. We consider reflexive modules over $R$. Since $R$ is normal, $M$ is reflexive if and only if it is maximal Cohen--Macaulay (=CM), and we write $\CM(R)$ for the category of CM-modules over $R$. By Herzog's theorem \cite{Herzog78}, there is an isomorphism of $R$-modules:
$$ S \cong \bigoplus_{M \in \CM(R)} M^{a_M}, \textrm{ where the integer $a_M=\rank_R M$.} $$ 
Differently phrased: $S$ is a \emph{representation generator} for $\CM(R)$. In particular, there are only finitely many indecomposable objects in $\CM(R)$. One then says that $R$ is of \emph{finite CM-type}. By \cite{BuchweitzGreuelSchreyer, Herzog78, KnoerrerCohenMacaulay}, a Gorenstein ring $R$ is of finite CM-type if and only if it is isomorphic to a simple hypersurface singularity (if $\dim R=2$, these are precisely the ADE-surface singularities).  \\

Starting from the indecomposable CM-modules over $R$, one obtains a third quiver, the \emph{Auslander--Reiten-quiver} (short: \emph{AR-quiver}) of $R$: its vertices are given by the indecomposable CM-modules $M_i$, $i=1, \ldots, n$ of $R$ and there are $m_{ij}$ arrows from $M_i$ to $M_j$ if and only if in the almost split sequence $0 \rightarrow \tau M_j \rightarrow E \rightarrow M_j \rightarrow 0$ ending in $M_j$, $M_i$ appears with multiplicity $m_{ij}$ in $E$. Here one can also see that $\tau M_i = M_i$ for all $M_i \not \cong R$ and that the AR-quiver of $R$ is precisely the McKay quiver of $\Gamma$. Moreover, the ranks of the indecomposable modules $M_i$ correspond to the dimensions of the irreducible representations of $\Gamma$. \\
This astounding correspondence was proven by Maurice Auslander \cite{Auslander86} and looking at his proof more carefully, we will obtain the link to NCCRs. Therefore we need some general notation first: \\

Let $G \subseteq \GL(V)$ finite act on $S=\Sym_k(V)=k[x_1,\ldots, x_n]$ for some vector space $V \cong k^n$. Then define $A=S*G$ to be the \emph{skew group ring} (or \emph{twisted group ring}) of $G$. As an $S$-module, $A$ is just $S \otimes_k kG$, but the multiplication on $A$ is twisted by the action of $G$: for elements $s,s' \in S$ and $g, g' \in G$ define
$$sg \cdot s' g' := (sg(s')) (gg') \ ,$$
and extend by linearity. The following result is the key theorem in proving the correspondence between indecomposable CM-modules and irreducible representations. However note that the assumptions on $G$ are more general than before: we may take any $G$ which does not contain any pseudo-reflections and the dimension of $V$ may be greater than $2$: 

\begin{theorem}[Auslander]
Let $S$ be as above and assume that $G \subseteq \GL(V)$, $\dim V=n$, is small and set $R=S^G$. Then we have an isomorphism of algebras:
$$ A=S * G \xrightarrow{\cong} \End_R(S) \ , sg \mapsto (x \mapsto sg(x)) \ .$$
Moreover, $S* G$ is a CM-module over $R$ and $\gl (S*G)=n$, and $Z(A)=R$.
\end{theorem}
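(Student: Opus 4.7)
First I would verify that $\varphi(sg) = (x \mapsto s\cdot g(x))$ is a well-defined $R$-algebra homomorphism; compatibility with the twisted multiplication $(sg)(s'g') = (s\cdot g(s'))(gg')$ is a direct calculation. Injectivity follows from Dedekind's linear independence of characters applied to the Galois extension $L/K$ with $L=\mathrm{Frac}(S)$ and $K=\mathrm{Frac}(R)$: if $\sum_g s_g g$ acts trivially on $S$ it acts trivially on $L$, forcing each $s_g=0$. Both sides are finitely generated reflexive $R$-modules: $S*G \cong S^{\oplus|G|}$ is a direct sum of copies of $S$ and is maximal Cohen--Macaulay (which simultaneously yields assertion (2), since $S$ is MCM over $R$ because $R\hookrightarrow S$ is a finite extension of domains of the same dimension with $S$ regular), while $\End_R(S)=\Hom_R(S,S)$ is reflexive because $R$ is a normal domain and $S$ is reflexive over $R$. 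It therefore suffices to prove $\varphi$ becomes an isomorphism after localizing at every height-one prime of $R$.

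The main obstacle is this codimension-one surjectivity, and it is exactly where smallness enters. Because $G$ contains no pseudo-reflections, the fixed locus of every nontrivial $g\in G$ has codimension $\geq 2$ in $V$; hence the ramification locus of $\Spec(S)\to\Spec(R)$ has codimension $\geq 2$. For any height-one prime $\mathfrak p$ of $R$, the localization $S_\mathfrak p$ is therefore étale and Galois over $R_\mathfrak p$ with group $G$, and the classical identification of the twisted group ring of an unramified Galois extension with its endomorphism algebra gives $(S*G)_\mathfrak p \xrightarrow{\sim} \End_{R_\mathfrak p}(S_\mathfrak p)$. Combined with the reflexivity of both sides, this upgrades to the global isomorphism.

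For the remaining assertions: for $\gl(S*G)=n$ I would use that $|G|$ is invertible in $k=\CC$, so any projective $S$-resolution of an $S*G$-module can be averaged by the Reynolds operator to give a projective $S*G$-resolution of the same length; since $\gl(S)=n$ this gives $\gl(S*G)\leq n$, and equality follows by computing $\mathrm{pd}_{S*G}(k)=n$ from the $G$-equivariant Koszul complex on $x_1,\dots,x_n$. Finally, for $Z(A)=R$, take a central $\alpha=\sum_g s_g g$; commuting $\alpha$ with arbitrary $s\in S$ yields $s_g(g(s)-s)=0$ in the domain $S$ for each $g$, and since $G$ acts faithfully on $V$ one can for each $g\neq e$ choose $s\in V\subset S$ with $g(s)\neq s$, forcing $s_g=0$; hence $\alpha=s_e\in S$, and then commuting $s_e$ with every $h\in G$ gives $h(s_e)=s_e$, so $s_e\in S^G=R$.
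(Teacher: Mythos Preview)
The paper does not supply its own proof of this theorem; it is quoted as Auslander's result with a reference to \cite{Auslander86}, and the surrounding corollary only \emph{uses} the statement. So there is no in-paper argument to compare against.

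Your argument is correct and is essentially the standard proof one finds in the literature (e.g.\ \cite{Auslander86}, or the textbook accounts in \cite{Yos,LeuschkeWiegand}). The architecture---injectivity via linear independence of characters, reflexivity of both sides over the normal domain $R$, reduction to height~$1$, and then smallness forcing the inertia groups at height-one primes to be trivial so that $S_{\fp}/R_{\fp}$ is \'etale $G$-Galois---is exactly the classical route. Two small points worth tightening:
\begin{enumerate}
\item For ``$S$ is MCM over $R$'' you appeal to ``finite extension of domains of the same dimension with $S$ regular''; the actual reason is that $\sqrt{\fm_R S}=\fm_S$ in the graded (or complete local) setting, so $\mathrm{depth}_R S=\mathrm{depth}_S S=n=\dim R$. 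Your hypothesis suffices once you say this.
\item At height-one $\fp$ you invoke ``the classical identification of the twisted group ring of an unramified Galois extension with its endomorphism algebra.'' This is true, but it is worth noting explicitly why \emph{smallness} gives triviality of inertia: if $g$ lies in the inertia group at a height-one prime $\mathfrak q\subset S$, then $g$ acts trivially on $S/\mathfrak q$, hence $V(\mathfrak q)\subseteq\mathrm{Fix}(g)$; since $\mathrm{Fix}(g)$ is linear this forces $g$ to be a pseudo-reflection or the identity. That is the precise mechanism by which the hypothesis enters.
\end{enumerate}
The global-dimension argument (averaging a projective $S$-resolution using $|G|\in k^\times$ and reading off $\mathrm{pd}_{S*G}(k)=n$ from the $G$-equivariant Koszul complex) and the center computation are both fine as written.
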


This result can be phrased in terms of NCCRs: 

\begin{cor} If $G\subseteq \SL(V)$, that is, $R$ is a Gorenstein singularity,  then $A$ is an NCCR of $R$. If $G \subseteq \GL(V)$ is small and not in $\SL(V)$, then $A$ is an NCR of $R$. 
\end{cor}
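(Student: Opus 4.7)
The corollary is essentially an unpacking of Auslander's theorem, so my plan is to verify the requirements of the two definitions given earlier. For the NCR claim in the small (but not necessarily $\SL$) case, I need three things: $S$ is a finitely generated $R$-module, $S$ is faithful over $R$, and $\End_R(S)$ has finite global dimension. Finite generation is standard (one may invoke the ring of invariants being noetherian and $S$ integral over $R$, or quote Theorem~\ref{Thm:CST}(b) when $G$ is a reflection group; in any case, finiteness follows since $G$ is finite). Faithfulness is immediate because $R \hookrightarrow S$, so any $r \in R$ annihilating $S$ must annihilate $1 \in S$ and hence is zero. By Auslander's theorem, $\End_R(S) \cong S*G = A$ and $\gl(A) = n < \infty$. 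This establishes the NCR statement.

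For the NCCR case, assume $G \subseteq \SL(V)$. First, I note that any pseudo-reflection has determinant a non-trivial root of unity, so $G \cap \SL(V)$ automatically contains no pseudo-reflections; hence $G$ is small and Auslander's theorem applies. Watanabe's theorem gives that $R=S^G$ is Gorenstein, and Auslander's theorem gives that $A$ is MCM over $R$. It remains to show that $A$ is a \emph{nonsingular order}, i.e., $\gl(A_\fp) = \gl(R_\fp)$ for every $\fp \in \Spec(R)$.

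For the nonsingular order property, I would split on whether $\fp$ lies in the branch locus. Away from the branch locus the $G$-action on $\Spec S$ is free, so $S_\fp$ is étale over $R_\fp$ and $A_\fp = S_\fp * G$ is Morita equivalent to $R_\fp$; the two global dimensions then agree. On the branch locus, I use the global fact $\gl(A) = n$ together with $A$ being MCM over the Gorenstein ring $R$: Auslander--Buchsbaum--type arguments give $\gl(A_\fp) \leq \dim R_\fp$, while the opposite inequality is forced by the local fact that resolutions of $A_\fp$-modules restrict to resolutions over $R_\fp$. Combined with $R$ Gorenstein, this yields the required equality.

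The main obstacle is the nonsingular order condition at primes where the quotient map ramifies; this is precisely where the interaction between the group action and the global dimension computation is most delicate, and is the substance that Auslander's theorem encodes in the clean statement $\gl(A) = n$ for small $G$. Once this is accepted, the corollary is formal.
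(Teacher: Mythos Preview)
Your NCR argument matches the paper's. For the NCCR half, the paper takes a shorter route: after noting that $R$ is a normal Gorenstein domain (Hochster--Roberts plus Watanabe), it simply invokes \cite[Lemma~4.2]{VandenBergh04}, which says that over such an $R$, any module-finite $R$-algebra that is MCM and has finite global dimension is automatically a nonsingular order. That lemma applies uniformly at every prime, so your case split on the branch locus is unnecessary --- the \'etale/Morita argument away from ramification is correct but superfluous.

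Your direct verification is essentially an attempt to reprove Van den Bergh's lemma in situ, and while the ingredients are right, the details are looser than you suggest. The depth argument you call ``Auslander--Buchsbaum--type'' really yields the \emph{lower} bound $\gl(A_\fp)\geq\dim R_\fp$: take a simple $A_\fp$-module (depth $0$ over $R_\fp$) and count depths along an $A_\fp$-projective resolution, whose terms are MCM over $R_\fp$. The \emph{upper} bound $\gl(A_\fp)\leq\dim R_\fp$ is the subtler direction and is precisely where the Gorenstein hypothesis does real work: one must show that any $A_\fp$-module of finite projective dimension that is already MCM over $R_\fp$ is $A_\fp$-projective, which goes through an $\Ext$-vanishing argument against $A_\fp$ using that $R_\fp$ has finite injective dimension. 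Your phrase ``resolutions of $A_\fp$-modules restrict to resolutions over $R_\fp$'' does not by itself produce either inequality. None of this is fatal --- the strategy can be completed --- but the paper's single citation to Van den Bergh packages all of it cleanly and avoids the bookkeeping.
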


\begin{proof}
The twisted group ring $A \cong \End_RS$ is an endomorphism ring of a faithful CM-module over $R$. Since $\gl A =n < \infty$, $A$ is an NCR. For the crepancy in the $\SL(V)$ case first note that $R$ is a normal Gorenstein domain by the theorems of Hochster--Roberts and Watanabe. Then the fact that $S*G\cong \End_RS$ is CM over $R$ implies that $\End_R S$ is a nonsingular order, see \cite[Lemma 4.2]{VandenBergh04} and thus an NCCR.
\end{proof}

For $G=\Gamma \subseteq \SL(V)$ and $\dim V=2$, Auslander's theorem implies a 1-1-correspondence between the indecomposable CM-modules over $R$, the indecomposable summands of $S$, the indecomposable projective modules over $S*G$, and the irreducible $\Gamma$-representations. Then McKay's observation gives the correspondence to the geometry, namely, the bijection to the irreducible components of the exceptional divisor on the minimal resolution of $R$. \\
We shortly sketch the correspondences: first, by Herzog's theorem the indecomposable objects in $\CM(R)$ are in bijection to the indecomposable $R$-summands of $S$. For the second part, send the indecomposable summand $M$ of $S$ to $\Hom_R(S,M)$, which is an indecomposable projective $\End_R(S)$-module. By Auslander's theorem, it is an indecomposable projective $S*\Gamma$-module. For the last correspondence, send a indecomposable projective $S*\Gamma$-module to $P/(x,y)P$, which is a simple module over $k\Gamma$ and hence an irreducible $\Gamma$-representation. For details about these constructions, see \cite{Auslander86, Yos, LeuschkeWiegand}.  

\subsection{Auslander's theorem and reflections}

Looking at Auslander's theorem, it is natural to ask:
\begin{question} 
Let $G \subseteq \GL(V)$, $\dim V=n$, be generated by pseudo-reflections. Is there an analogue of Auslander's theorem? In particular, is there also a noncommutative resolution hidden somewhere?
 \end{question}
 
 The first problem in generalizing Auslander's theorem is that if $G$ is a pseudo-reflection group, then the map 
 $$ A=S*G \hookrightarrow \End_R(S) $$
 is no longer surjective. One can see this for example for $G=S_n$. Then $\End_R(S)$ contains the so-called Demazure operators, see e.g. \cite{LSch}, which are not contained in the skew group ring. However, one can show in this case that $A$ is the intersection of $\End_R(S) \cap (\End_R(S))^\tau$ in $Q(A)$, the quotient ring of $A$, where $\tau$ is a particular anti-involution on $\End_R(S)$, for details see \cite{KKu1}. \\
 
 Another ``problem'' is that $R=S^G$ is no longer singular  if $G$ is generated by pseudo-reflections, by Thm.~\ref{Thm:CST}. This, however, can be remedied by looking at the discriminant $\Delta \in R$: the discriminant defines a singular hypersurface, whose singularities occur in codimension $1$. More precisely: it was shown by Saito and Terao that the discriminant of a reflection arrangement $\cala(G)$ is a so-called \emph{free divisor} in $R$, that is, the Jacobian ideal of $\Delta$ is a CM-module over $R/(\Delta)$. One can also show that the reflection arrangement itself defines a free divisor in $S$. For more details, see \cite{OTe}. \\
 
The above facts already suggest our strategy to attack the problem of finding an analogue of Auslander's theorem and a noncommutative resolution for the discriminant: we will start with the twisted group ring $S*G$ and using the small group $G \cap \SL(V)$ we will cook up a NCR of $\Delta$, which will also yield a $1\! - \!1$-correspondence to irreducible representations of $G$.

\section{NCRs of discriminants}

For this section let us change the notation slightly: first, we will assume that $G$ is a true reflection group, i.e., $G\subseteq \GL(V)$, $\dim V=n$, is generated by reflections of order $2$ (some of our results also hold for pseudo-reflection groups but the main correspondence only works in this case so far).  We will denote the invariant ring $S^G$ by $T$. By Theorem \ref{Thm:CST} $T \cong k[f_1, \ldots, f_n]$ for the basic invariants $f_i \in S$.
Since $G$ is a true reflection group $J=z$ and the hyperplane arrangement $\cala(G)$ is defined by the polynomial $J=\det(\frac{\partial f_i}{\partial x_j})$. 
The discriminant $\Delta$ is then defined by the equation $J^2=\Delta$, where $\Delta$ is in $T$.  Its coordinate ring will be denoted by $T/(\Delta)$.  
For our construction of the NCR of $\Delta$, we consider the small group $\Gamma = G \cap SL(V)$, with invariant ring $R=S^\Gamma$. We have an exact sequence of groups $1 \rightarrow \Gamma \rightarrow G \rightarrow H \rightarrow 1$, where the quotient is $H \cong \mu_2=\langle \sigma \rangle$.  Moreover, as above, we denote by $A=S*G$ the twisted group ring of $G$. \\

The first result deals with the problem that $A$ is not isomorphic to  $\End_T(S)$. We can show that $A$ is still isomorphic to an endomorphism ring - yet over another twisted group ring:

\begin{theorem} \label{Thm:Bilodeau}
With notation introduced in the paragraph above, there is an isomorphism of rings
$$ \mathrm{End}_{R*H}(S*\Gamma) \cong S*G.$$
\end{theorem}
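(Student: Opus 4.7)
The plan is to promote Auslander's classical theorem $\End_R(S) \cong S*\Gamma$ to an $H$-equivariant statement. First, I fix a reflection $\sigma \in G \setminus \Gamma$; since $\sigma$ has order two, it splits the sequence $1 \to \Gamma \to G \to H \to 1$, identifying $G$ with $\Gamma \rtimes H$ and hence $S*G$ with the crossed product $(S*\Gamma) * H$, where $H$ acts on $S*\Gamma$ by conjugation by $\sigma$. This conjugation is well-defined on $S*\Gamma$ because $\Gamma \triangleleft G$. The same conjugation furnishes $S*\Gamma$ with a canonical left $R*H$-module structure: the subring $R \subseteq Z(S*\Gamma)$ acts by multiplication and $H$ acts by conjugation, with the semilinearity condition $h \cdot (rm) = h(r)(h \cdot m)$ holding because the $H$-action on $R$ is exactly the one induced from the quotient $G/\Gamma$.

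Next, I construct a ring homomorphism $\phi \colon S*G \to \End_{R*H}(S*\Gamma)$. Using the crossed-product decomposition $S*G = (S*\Gamma) \oplus (S*\Gamma)\sigma$, elements of $S*\Gamma \subseteq S*G$ are sent to endomorphisms built from right multiplication, while $\sigma$ is sent to an endomorphism built from the conjugation operator, arranged so that composition of endomorphisms realizes the multiplication in $S*G$. Verifying that $\phi$ is a well-defined ring homomorphism reduces to checking that the crossed-product relations $\sigma a = (h \cdot a)\sigma$ for $a \in S*\Gamma$ are respected. Injectivity follows by evaluating $\phi(x)$ on $1 \in S*\Gamma$, which separates the two summands of $S*G$.

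The main task is surjectivity, which I would deduce from an $H$-equivariant enhancement of the Morita equivalence implicit in Auslander's theorem. The $(R, S*\Gamma)$-bimodule $S$ is a progenerator on both sides, and all data in sight ($R$, $S$, and $S*\Gamma$) carry compatible $H$-actions with the Auslander isomorphism itself being $H$-equivariant. Standard equivariant Morita theory then promotes this to a Morita equivalence between $\Mod(R*H)$ and $\Mod((S*\Gamma) * H) = \Mod(S*G)$. Under this equivalence, endomorphism rings transport, and tracking the $R*H$-module $S*\Gamma$ across produces the desired isomorphism $\End_{R*H}(S*\Gamma) \cong S*G$. The hardest part will be the careful bookkeeping needed to set up the $H$-equivariant Morita framework and, in particular, to match the $R*H$-module structure on $S*\Gamma$ constructed above with the object on the $S*G$ side whose endomorphism ring is $S*G$ itself.
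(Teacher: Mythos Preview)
The paper does not actually prove this theorem; it only remarks that the result ``follows from generalizing the approach of H.~Kn\"orrer for curves (also using ideas from J.~Bilodeau),'' with all details deferred to the companion paper \cite{BFI}. Judging from those references and from the paragraph immediately following the theorem (where $B=R*H$ is analysed concretely as a $T$-path algebra with idempotents $e_\pm$), the intended argument is a hands-on one, adapting Kn\"orrer's double-cover techniques from the curve case. Your route through $H$-equivariant Morita theory is therefore genuinely different and more conceptual: it explains the isomorphism as ``Auslander's theorem twisted by $H$'' rather than verifying it by an explicit computation tailored to the $\mu_2$-cover $T\hookrightarrow R$. The paper's approach presumably buys more explicit control over the bimodule structures used in the subsequent recollement argument, while yours would give a cleaner structural reason for the isomorphism.

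Your overall strategy is reasonable, but the explicit description of $\phi$ does not work as written and should be repaired before you rely on it. If $\phi(a)=\rho_a$ is right multiplication for $a\in S*\Gamma$, then $\phi(a)\circ\phi(b)=\rho_{ba}$, so $\phi$ is an anti-homomorphism rather than a ring homomorphism. More seriously, $\rho_a$ is not $R*H$-linear for the conjugation module structure you set up: one has $\rho_a(\sigma\cdot m)=\sigma m\sigma^{-1}a$ whereas $\sigma\cdot\rho_a(m)=\sigma m a\sigma^{-1}$, and these agree only when $a$ commutes with $\sigma$. Your injectivity test ``evaluate at $1$'' also collapses, since if $\phi(\sigma)$ is conjugation then $\phi(\sigma)(1)=1$, so evaluation at $1$ cannot separate the summands $S*\Gamma$ and $(S*\Gamma)\sigma$. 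The cleanest fix is to abandon the attempt to write $\phi$ down by hand and argue entirely on the categorical level: verify that Auslander's isomorphism $S*\Gamma\xrightarrow{\sim}\End_R(S)$ is $H$-equivariant for the conjugation action on both sides (it is), and then show that passing to crossed products with $H$ carries the Morita datum for $(R,\,S*\Gamma)$ to one for $(R*H,\,S*G)$ under which the $R*H$-module $S*\Gamma$ corresponds to the regular $S*G$-module. The ``careful bookkeeping'' you flag is exactly the identification of the correct $R*H$-module structure on $S*\Gamma$ that makes this last step go through; it is not the one given by conjugation alone.
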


This result follows from generalizing the approach of H.~Kn\"orrer for curves \cite{KnoerrerCurves} (also using ideas from J.~Bilodeau \cite{Bilodeau}). \\

For simplifying notation set $B:=R*H$. Now one can interpret $B$ as the path algebra over $T$ of a quiver $Q$ modulo some relations. Then $B$ has idempotents $e_{\pm}=\frac{1}{2}(1 \pm \sigma)$, where $\sigma$ is the generator of $H$, as defined above. One crucial observation here is that $B/Be_{\pm}B$ is isomorphic to $T/(\Delta)$ as rings. Thus, using the functor 
$$\mathrm{mod}(B/Be_{-}B) \xleftarrow{i^*(-)= - \otimes_B B/Be_{-}B} \mathrm{mod}(B)$$ 
from the standard recollement we obtain the following result:
\begin{theorem} \label{Thm:Knoerrer}
The functor $i^*$ induces an equivalence of categories 
$$\CM(B)/ \langle e_- B \rangle \simeq \CM(T/(\Delta)), $$
where $\langle e_- B \rangle$ denotes the ideal generated by the object $e_{-}B$ in the category $\CM(B)$.
\end{theorem}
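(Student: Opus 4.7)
The plan is to exploit an explicit presentation of $B = R * H$ as a generalized Clifford algebra over $T$, under which CM $B$-modules become matrix factorizations of $\Delta$, and then to invoke Eisenbud's theorem to pass to $\CM(T/(\Delta))$. Since $H = \la \sigma \ra \cong \mu_2$ acts on $R$ with $R$ free over $T$ of rank two on the basis $\{1, J\}$ (where $\sigma(J) = -J$ and $J^2 = \Delta$ up to a unit), one has
$$B \;\cong\; T\langle J, \sigma \rangle \big/ \bigl(\sigma^{2} - 1,\ \sigma J + J\sigma,\ J^{2} - \Delta\bigr).$$
The idempotents $e_\pm = (1 \pm \sigma)/2$ are orthogonal with $e_+ + e_- = 1$, and $\sigma J + J\sigma = 0$ implies that right multiplication by $J$ swaps the Peirce components $M_\pm := Me_\pm$ of any right $B$-module $M$. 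Accordingly, a right $B$-module is the same data as a pair of $T$-modules $(M_+, M_-)$ together with $T$-linear maps $\phi : M_+ \to M_-$ and $\psi : M_- \to M_+$ (right multiplication by $J$) satisfying $\psi\phi = \Delta\cdot\mathrm{id}_{M_+}$ and $\phi\psi = \Delta\cdot\mathrm{id}_{M_-}$.

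For $M \in \CM(B)$ the underlying $T$-module is free (since $T$ is a polynomial ring), so the $M_\pm$ are free $T$-modules and $(\phi,\psi)$ is genuinely a matrix factorization of $\Delta$ over $T$. A direct calculation shows $Me_- B = M_- \oplus \psi(M_-)$ inside $M_+ \oplus M_-$, whence
$$i^*(M) \;=\; M/Me_-B \;\cong\; \cok(\psi),$$
which is a CM module over $T/(\Delta)$ by Eisenbud's theorem. In particular $i^*(e_+B) \cong T/(\Delta)$, while for $e_-B$ one finds $(\phi, \psi) = (\Delta\cdot\mathrm{id}, \mathrm{id})$ and hence $i^*(e_-B) = 0$. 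Thus $i^*$ vanishes on the ideal $\langle e_-B \rangle$ and descends to a functor $\CM(B)/\langle e_-B \rangle \to \CM(T/(\Delta))$.

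Essential surjectivity of the induced functor is immediate from Eisenbud's theorem: every CM module over the hypersurface $T/(\Delta)$ is of the form $\cok(\psi)$ for some matrix factorization $(\phi, \psi)$, which reassembles into a CM $B$-module mapping to it. For fully faithfulness, a $B$-morphism $M \to M'$ is a pair $(\alpha_+, \alpha_-)$ of $T$-linear maps compatible with $(\phi, \psi)$ and $(\phi', \psi')$, and $i^*$ sends it to the induced $\bar\alpha_+ : \cok(\psi) \to \cok(\psi')$; if $\bar\alpha_+ = 0$ then $\alpha_+ = \psi' h$ for some $h : M_+ \to M'_-$, and from $h$ one builds an explicit factorization of $(\alpha_+, \alpha_-)$ through a direct sum of copies of $e_-B$. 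This is the standard null-homotopy argument for matrix factorizations, translated into the $B$-module language.

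The main obstacle I expect is exactly this last verification: producing from the lift $h$ a compatible factorization of both $\alpha_+$ and $\alpha_-$ through $e_-B$, and conversely showing that every such factorization arises in this way. Everything else is essentially formal: the properties of $i^*$ come from the standard recollement attached to the idempotent $e_-$, the identification $R = T \oplus TJ$ is elementary Chevalley--Shephard--Todd, and Eisenbud's theorem provides the passage between matrix factorizations and CM hypersurface modules.
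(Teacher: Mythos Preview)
Your proposal is correct and amounts to the natural way to fill in the paper's sketch. The paper does not prove this theorem in detail here (it defers to \cite{BFI}); it only records that $B$ can be viewed as the path algebra of a two-vertex quiver with relations over $T$, that $B/Be_{-}B\cong T/(\Delta)$, and that one then applies the recollement functor $i^{*}$. Your Clifford-type presentation of $B$ \emph{is} that quiver description made explicit --- vertices $e_{\pm}$, arrows given by right multiplication by $J$, relations $\psi\phi=\phi\psi=\Delta$ --- so that $\CM(B)$ becomes the category of matrix factorizations of $\Delta$ over $T$, and Eisenbud's theorem then provides the equivalence with $\CM(T/(\Delta))$ after killing the trivial factorization $(\Delta,1)\cong e_{-}B$. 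The ``obstacle'' you flag is routine: once $\alpha_{+}=\psi' h$, injectivity of $\psi'$ (from $\phi'\psi'=\Delta$ on a torsion-free module) forces $\alpha_{-}=h\psi$, and the pair $(\beta_{+},\beta_{-})=(\id,\psi)$, $(\gamma_{+},\gamma_{-})=(\psi' h,h)$ gives the factorization through a sum of copies of $e_{-}B$ directly. One small correction: the decomposition $R=T\oplus TJ$ is not Chevalley--Shephard--Todd alone but Stanley's description of relative invariants \cite{StanleyInvariants}, which the paper itself invokes a few lines after the theorem.
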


 Theorem \ref{Thm:Bilodeau} shows that $A$ is in some sense too large for being a NCR of $\Delta$. But theorem \ref{Thm:Knoerrer} suggests that we have to quotient $A$ by an ideal: since $A \in \CM(B)$, we calculate its image in $\CM(B)/\langle e_- B \rangle$ and find that it is isomorphic to the quotient $\bar A= A/AeA$, where $e=\frac{1}{|G|}\sum_{g \in G} g\in A$ is the idempotent corresponding to the trivial representation of $G$. On the other hand, calculate the image of $A$ under the recollement functor $i^*$: one gets that
 $$ i^*(A) = \End_{T/(\Delta)}(i^*(S *H)) \ .$$
Exploiting results from Stanley about the structure of $R$ as $T$-module \cite{StanleyInvariants}, we obtain that $i^*(S*H) \cong S/(J)$, as $T/(\Delta)$-module. As a ring, $S/(J)$ is precisely the coordinate ring of the reflection arrangement $\cala(G)$! In the last step, a result of Auslander--Platzeck--Todorov \cite{AuslanderPlatzeckTodorov} about global dimension of quotients yields the following

\begin{theorem} \label{Thm:BFI}
With notation as above,
$$ \bar A:= A/AeA \cong \mathrm{End}_{T/(\Delta)} (S/(J)),$$
and $\gl \bar A < \infty$. Thus $\bar A$ yields a NCR of the free divisor $T/\Delta$. \\
Moreover, the indecomposable projective modules over $\bar A$ are in bijection with the non-trivial representations of $G$ and also with certain CM-modules over the discriminant, namely the $T/(\Delta)$-direct summands of $S/(J)$. 
\end{theorem}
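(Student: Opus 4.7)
The strategy is to identify both sides of the claimed ring isomorphism with the image of the single object $A=S*G$ under two compatible descriptions of the Kn\"orrer-type equivalence of Theorem \ref{Thm:Knoerrer}. Via Theorem \ref{Thm:Bilodeau}, $A\cong \End_B(S*\Gamma)$, so $A$ naturally lives in $\CM(B)$, and I would compute its image in $\CM(T/(\Delta))$ in two compatible ways: algebraically as the quotient $A/AeA$, and functorially via $i^*$ as an endomorphism ring over $T/(\Delta)$.

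The first and main technical step is the identification of $\bar A=A/AeA$ with the image of $A$ in the additive quotient $\CM(B)/\langle e_-B\rangle$, where $e=|G|^{-1}\sum_{g\in G}g\in A$ is the idempotent cutting out the trivial $G$-representation. Under the isomorphism $A\cong \End_B(S*\Gamma)$, I would show that the summand $Ae$ of $A$ corresponds to the $\sigma$-anti-invariant part of $S*\Gamma$ as a $B$-module, i.e.\ the part lying in $\add(e_-B)$. Consequently the two-sided ideal $AeA$ matches the ideal of endomorphisms factoring through $\add(e_-B)$, so $A/AeA$ is exactly the image of $A$ in the quotient category. Pinning down this matching of idempotents via the $H$-equivariant decomposition of $S*\Gamma$ is the main obstacle; it is where the Bilodeau-style computation of the $\sigma$-eigenspace decomposition of $S*\Gamma$ over $R*H$ is invoked.

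Granting this, applying the equivalence $i^*$ of Theorem \ref{Thm:Knoerrer} to the endomorphism ring $\End_B(S*\Gamma)$ yields $\End_{T/(\Delta)}(i^*(S*\Gamma))$; combined with the Stanley-based identification of $i^*(S*\Gamma)$ with $S/(J)$ (the coordinate ring of $\cala(G)$) as a $T/(\Delta)$-module, this produces the desired $\bar A \cong \End_{T/(\Delta)}(S/(J))$. For finite global dimension I would invoke the theorem of Auslander--Platzeck--Todorov: Auslander's theorem gives $\gl A = n$, the corner $eAe\cong T$ is regular, and $Ae$ has finite projective dimension as a right $A$-module since $A$ is CM over $R$, so their criterion forces $\gl\bar A<\infty$. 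Since $S/(J)$ is a faithful MCM $T/(\Delta)$-module (the latter being a free divisor by Saito--Terao), $\bar A$ is a genuine NCR of $\Spec(T/(\Delta))$.

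The bijection with non-trivial irreducible representations of $G$ then follows formally. The indecomposable projective $A$-modules are in bijection with irreducible $kG$-modules via the standard skew-group-ring description (equivalently, via the semisimple specialization $A/\fm A\cong kG$ at the origin), and quotienting by $AeA$ kills precisely the projective $Ae$ attached to the trivial representation. The remaining indecomposable projectives of $\bar A$ are in Morita-style bijection with the non-isomorphic indecomposable $T/(\Delta)$-direct summands of $S/(J)$ through the isomorphism $\bar A\cong \End_{T/(\Delta)}(S/(J))$ just established, and each such summand is maximal Cohen--Macaulay over $T/(\Delta)$ since $S/(J)$ is.
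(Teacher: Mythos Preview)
Your proposal is correct and follows essentially the same route as the paper: identify $A\cong\End_B(S*\Gamma)$ via Theorem~\ref{Thm:Bilodeau}, compute its image under the equivalence of Theorem~\ref{Thm:Knoerrer} in two ways (as $A/AeA$ on the quotient-category side and as $\End_{T/(\Delta)}(i^*(-))$ on the recollement side), invoke Stanley's result to identify the module with $S/(J)$, and finish with Auslander--Platzeck--Todorov for $\gl\bar A<\infty$. The only cosmetic discrepancy is that the paper writes $i^*(S*H)$ where you write $i^*(S*\Gamma)$; your justification ``$\gl A=n$ by Auslander's theorem'' should more precisely be the standard fact that $\gl(S*G)=\gl S$ whenever $|G|$ is invertible, since Auslander's theorem as stated here requires $G$ small.
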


Thus we not only obtain a NCR of discriminants of reflection arrangements but also a McKay correspondence for reflection groups $G$. 

\begin{example} \label{Ex:curves}
Let $\dim V=2$ and let $G \subseteq \GL(V)$ be a true reflection group. Then the discriminant $\Delta$ is an ADE-curve singularity, see example \ref{Ex:ADEcurves}.   We obtain that $S/(J)$ is a generator of the category of CM-modules over $T/(\Delta)$, that is, $\mathrm{add}(S/(J))=\CM(T/(\Delta))$. Moreover, the multiplicities, with which the indecomposables appear in $S/(J)$ are precisely the ranks of the corresponding irreducible representations. \\
As a particular example, take $G=S_3$, whose discriminant $\Delta$ is a cusp (aka $A_2$-singularity). For the cusp, one can show that $S/(J)\cong T/(\Delta) \oplus \fm^{\oplus 2}$ as $T/(\Delta)$-modules, where $\fm$ denotes the maximal ideal $(u,v)$ in $T/(\Delta)=k[u,v]/(u^3-v^2)$. Here one sees that $T/(\Delta)$ corresponds to the determinantal representation and $\fm$ to the canonical representation given by $G \hookrightarrow \GL(2,k)$. 

\end{example}

\section{Further questions}

In general we are still missing a conceptual explanation for the $T/(\Delta)$-direct summands of $S/(J)$: in dimension $2$, see example \ref{Ex:curves}, any indecomposable CM-module over $T/(\Delta)$ can be obtained from $S/(J)$ (via matrix factorizations).  But if $\dim V \geq 3$, that is, the Krull-dimension of $T/(\Delta)$ is greater than or equal to $2$, $T/(\Delta)$ is not of finite CM-type. However, the factors of $S/(J)$ are still CM-modules and can be calculated as matrix factorizations. So far we only have few examples  (the normal crossing divisor, as studied in Dao--Faber--Ingalls \cite{DFI} or the swallowtail, that is, the discriminant of $S_4$, where we can describe the $T/(\Delta)$-direct summands using Hovinen's classification of graded rank $1$ MCM-modules \cite{Hovinen}). \\

Thus, for a geometric interpretation of the direct summands of $S/(J)$ we want to establish a similar correspondence as in \cite{GonzalezSprinbergVerdier}. A next step would be to realize geometric resolutions as moduli spaces of isomorphism classes of representations of certain  algebras as in \cite{CrawleyBoevey}.\\

But, in a different vein, the structure of $\bar A$ as an $S/(J)$-module seems to be easier to understand: $\bar A$ is isomorphic to the cokernel of the map $\varphi$ given by left multiplication on $G$, i.e., the matrix of $\varphi$ corresponds to the multiplication table of the group $G$. Since  Frobenius and Dedekind, the block decomposition of this matrix is well-known. Our goal is to use this surprising discovery to learn more about the decomposition of $\bar A$ and hence of $S/(J)$ over the discriminant. \\
Moreover, looking at the quiver of $\bar A$, the exact form of the generating (necessarily quadratic) relations remains mysterious and will be the subject of further research.

\section{Acknowledgements}
We want to thank the organizers of the ICRA for making it such a pleasant and stimulating conference. In particular, we thank Graham Leuschke for his encouragement to write this proceedings article. We also thank the anonymous referee for helpful comments.
We acknowledge and highly appreciate that the research presented in this paper was supported through the program ``Research in Pairs'' in 2015 and the program ``Oberwolfach Leibniz Fellowships'' in 2016 by the Mathematisches Forschungsinstitut Oberwolfach and by the Institut Mittag-Leffler (Djursholm, Sweden) in frame of the program ``Representation theory'' in 2015. \\

E.F. and C.I. want to thank R.-O.B. for the many hours we spent together working on this project - his enthusiasm, knowledge, and deep insights will be dearly missed.

\newcommand{\etalchar}[1]{$^{#1}$}
\def\cprime{$'$} \def\cprime{$'$}

\end{document}